\newtheorem{corollary}{\bf Corollary}
\newtheorem{lemma}{\bf Lemma}
\newtheorem{proposition}{\bf Proposition}
\newtheorem{remark}{Remark}
\newtheorem{theorem}{\bf Theorem}
\renewcommand{\div}{{\rm div}}
\def\RR{{\mathrm R}}
\def \2R{{\hat{\RR}}}
\def\WW{{\mathrm W}}
\def\Rc{Ric}
\def\det{\mathrm{det}}
\theoremstyle{definition}
\numberwithin{equation}{section}
\title[Four-dimensional K\"ahler-Ricci  solitons]{Rigidity of four-dimensional\\ K\"ahler-Ricci  solitons}
\author{Xiaodong Cao}
\author{Ernani Ribeiro Jr}
\author{Hung Tran}
\address[X. Cao]{Department of Mathematics, Cornell University, Ithaca, NY 14853} \email{xiaodongcao@cornell.edu}
\address[E. Ribeiro Jr]{Departamento  de Matem\'atica, Universidade Federal do Cear\'a - UFC, Campus do Pici, 60455-760, Fortaleza - CE, Brazil}
\email{ernani@mat.ufc.br}
\address[H. Tran]{Department of Mathematics and Statistics, Texas Tech University, Lubbock, TX 79409} \email{hung.tran@ttu.edu}
\keywords{Gradient Ricci Soliton; Four-Manifolds; Ricci Flow; K\"ahler Manifolds} \subjclass[2020]{Primary 53C25, 53C20, 53E20}
\date{\today}
\begin{document}

\begin{abstract}
In this article, we investigate four-dimensional gradient shrinking Ricci solitons close to a K\"ahler model. The first theorem could be considered as a rigidity result for the K\"ahler-Ricci soliton structure on $\mathbb{S}^2\times \mathbb{R}^2$ (in the sense of Remark \ref{rmk1}). Moreover, we show that if the quotient of norm of the self-dual  Weyl tensor and scalar curvature is close to that on a K\"ahler metric in a specific sense, then the gradient Ricci soliton must be either half-conformally flat or locally K\"ahler. 

\end{abstract}

\maketitle

\section{Introduction}
\label{int}

A complete Riemannian metric $g$ on an $n$-dimensional smooth manifold $M^n$ is called a {\it gradient shrinking Ricci soliton} (GSRS) if there exists a smooth potential function $f$ such that the Ricci tensor $\Rc$ and metric $g$ satisfy the equation
\begin{equation}
\label{maineq}
\Rc+ Hess\,f=\frac{1}{2} g.
\end{equation} Here, $Hess\,f$ denotes the Hessian of $f$. Ricci solitons are self-similar solutions to Hamilton's Ricci flow and play a crucial role in the singularity analysis of the Ricci flow \cite{Hamilton2}. They also provide a natural extension of Einstein manifolds, see \cite{caoALM11} for an overview on Ricci solitons.

In \cite{Hamilton2}, Hamilton showed that any two-dimensional GSRS is either isometric to the plane $\Bbb{R}^2$ or a quotient of the sphere $\Bbb{S}^2$ (both are K\"ahler by the identifications $\Bbb{R}^2 \simeq \Bbb{C}$ and $\Bbb{S}^2 \simeq \Bbb{CP}^1$). Furthermore, it follows by the works of Ivey \cite{Ivey}, Perelman \cite{Perelman2}, Naber \cite{Naber}, Ni-Wallach \cite{Ni}, and H.-D. Cao-Chen-Zhu \cite{CaoA}, that any three-dimensional GSRS is a finite quotient of either the round sphere $\Bbb{S}^3$, the Gaussian shrinking soliton $\Bbb{R}^3,$ or the round cylinder $\Bbb{S}^{2}\times\Bbb{R}$. 

In dimension $n=4,$ the first non-Einstein example of compact gradient shrinking Ricci soliton is a rotationally K\"ahler metric on $\Bbb{CP}^2\sharp (- \Bbb{CP}^2)$, where $(-\Bbb{CP}^2)$ is the complex projective space with the opposite orientation. This was first discovered by H.-D. Cao \cite{Cao1} and Koiso \cite{Ko}. Later, Wang and Zhu \cite{WZ1} constructed a new gradient K\"ahler-Ricci soliton on $\Bbb{CP}^2\sharp 2(- \Bbb{CP}^2)$. A well-known folklore conjecture  asks {\it whether any compact non-Einstein gradient Ricci soliton in dimension four is necessarily K\"ahler}. For noncompact examples, we have the gradient K\"ahler-Ricci soliton $\Bbb{S}^2\times \Bbb{R}^2$ with  the natural orientation induced from the complex structure on $\Bbb{CP}^1 \times \Bbb{C}$ and the family constructed by Feldman, Ilmanen and Knopf in \cite{FIK}, which is $U(n)$-invariant and cone-like at infinity.

Despite of recent important advancements, the classification of four-dimensional GSRS remains open, even in the case of complete K\"ahler-Ricci solitons; see, e.g., \cite{CaoChen,CRZ,CZ2021,ELM, Ni, Chen,Chow,zhang, Naber, CZ, PW2, CWZ, FLGR, MW,KW, MS, MW2, cifarelli2022finite, BCCD22KahlerRicci} for recent progress. By the works \cite{ELM, Ni, zhang, PW2, CWZ, MS}, locally conformally flat (i.e., $\WW=0$) four-dimensional complete GSRS are isometric to finite quotients of either $\Bbb{S}^4$, or $\Bbb{R}^4$, or $\Bbb{S}^{3}\times\Bbb{R}.$ Other relevant classification results have been obtained under various curvature conditions:  half-conformally flat ($\WW^{+}=0$ by a choice of orientation) by Chen and Wang \cite{CW}; Bach-flat by H.-D. Cao and Chen \cite{CaoChen};  harmonic Weyl tensor ($\div \WW=0$) by Fern\'{a}ndez-L\'opez and Garc\'ia-R\'io \cite{FLGR} together with Munteanu and Sesum \cite{MS}; and harmonic self-dual Weyl tensor  ($\div \WW^{+}=0$) by Wu et al. \cite{Wu}, also see \cite{CMM}.

On the other hand, there are  results based on pinching conditions of the Weyl curvature. Catino \cite{Catino} showed that any complete four-dimensional GSRS with nonnegative Ricci curvature and 
\begin{equation}
\label{cond01}
|\WW| R\leq \sqrt{3}\left(|\mathring{Ric}|-\frac{1}{2\sqrt{3}}R \right)^2
\end{equation} must be conformally flat, here, $\mathring{Ric}$ and $R$ denote  traceless Ricci tensor and  scalar curvature, respectively. The nonnegative Ricci curvature assumption was later removed in \cite{Wu}. Some related results involving integral pinching conditions were established in \cite{CH,catinoAdv}. By a different method, Zhang \cite{Zhang2} shows that any four-dimensional GSRS with $0\leq Ric\leq C$ and 
\begin{equation}
\label{cond02}
|\WW|\leq \gamma \Big| |\mathring{Ric}|-\frac{1}{2\sqrt{3}}R\Big|
\end{equation} is either flat or has $2$-positive Ricci curvature, for some constant $\gamma<1+\sqrt{3}$. 

It turns out that neither pinching conditions (\ref{cond01}) nor (\ref{cond02})  recovers the non-Einstein K\"ahler-Ricci soliton $\Bbb{S}^2\times \Bbb{R}^2$, the (normalized) GSRS $\Big(\Bbb{S}^2 (\sqrt{2})\times \Bbb{R}^2,\,g,\,f\Big)$ with product metric $g$ and potential function \[f(x,y)=\frac{|y|^{2}}{4}+1,\] for $(x,y)\in\Bbb{S}^2 (\sqrt{2})\times \Bbb{R}^2.$ A recent result, due to H.-D. Cao, Ribeiro and Zhou \cite[Theorem 1]{CRZ}, states that a complete four-dimensional GSRS satisfying 
\begin{equation}
\label{cond10}
|\WW^{+}|^{2} -\sqrt{6}|\WW^{+}|^{3}\geq \frac{1}{2}\langle (\mathring{Ric}\odot \mathring{Ric})^{+},\WW^{+}\rangle
\end{equation} is either Einstein, conformally flat, or $\Bbb{S}^{2}\times\Bbb{R}^{2}$. Here, $\odot$ denotes the Kulkarni-Nomizu product. One important new feature in the above result is that they only need a condition on the self-dual part of Weyl tensor, but no point-wise Ricci curvature bound is needed.  Nonetheless, as mentioned in  \cite[Remark 2]{CRZ}, it is interesting to replace the right hand side of (\ref{cond10}) by a sharp expression depending on the norm of $\mathring{Ric}$ instead of a Kulkarni-Nomizu product. \\

In this article, we first prove a classification result, in the spirit of  \cite[Theorem 1]{CRZ}, using the norm of $\mathring{Ric}$. More precisely, we have established the following result.

\begin{theorem}
\label{thmA}
Let $(M^4,\,g,\,f)$ be a complete four-dimensional gradient shrinking Ricci soliton (\ref{maineq}) satisfying the pinching condition 
\begin{equation}
\label{cond1}
|\WW^{+}|^2-\sqrt{6}|\WW^+|^3 \geq \frac{\sqrt{6}}{6}|\mathring{Ric}|^2|\WW^+|.
\end{equation} Then $(M^4,\,g,\,f)$ is either 
\begin{enumerate}
\item[(i)] isometric to the standard round sphere $\Bbb{S}^4$,  the complex projective space $\Bbb{CP}^2,$ or
\item[(ii)] a finite quotient of the Gaussian shrinking soliton $\Bbb{R}^4$,  the round cylinder $\Bbb{S}^{3}\times\Bbb{R},$ or 
\item[(iii)] a compact Einstein manifold with $\nabla \WW^+ \equiv 0$ and $\WW^+$ has precisely two distinct eigenvalues (including $\Bbb{S}^2 \times \Bbb{S}^2$ with the product metric), or
\item[(iv)] a finite quotient of the K\"ahler-Ricci soliton $\Bbb{S}^2\times \Bbb{R}^2.$ 
\end{enumerate}
\end{theorem}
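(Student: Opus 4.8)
\section{Proof proposal}

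The plan is to follow the now-standard strategy for curvature-pinching classifications of four-dimensional GSRS, as in \cite{CRZ,Catino,Wu}, but driven by the new algebraic inequality \eqref{cond1} phrased in terms of $|\mathring{Ric}|$ rather than a Kulkarni--Nomizu product. First I would recall the integral/Bochner identity available for the self-dual Weyl tensor on a GSRS: contracting the second Bianchi identity with $\WW^{+}$ and using the soliton equation \eqref{maineq}, one obtains a weighted elliptic inequality of the form
\begin{equation*}
\tfrac{1}{2}\Delta_f |\WW^{+}|^{2} \;=\; |\nabla \WW^{+}|^{2} \;-\; \langle \text{(lower order)},\WW^{+}\rangle \;+\; |\WW^{+}|^{2} \;-\; 2\,\mathrm{tr}\big((\WW^{+})^{3}\big) \;-\; \tfrac12\langle (\mathring{Ric}\odot\mathring{Ric})^{+},\WW^{+}\rangle,
\end{equation*}
where $\Delta_f=\Delta-\langle\nabla f,\nabla\cdot\rangle$ is the drift Laplacian. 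The elementary eigenvalue estimates $|\,\mathrm{tr}((\WW^{+})^{3})\,|\le \tfrac{1}{\sqrt{6}}|\WW^{+}|^{3}$ and $|\langle(\mathring{Ric}\odot\mathring{Ric})^{+},\WW^{+}\rangle|\le \tfrac{\sqrt6}{3}|\mathring{Ric}|^{2}|\WW^{+}|$ (the second is where the sharp constant $\sqrt{6}/6$ in \eqref{cond1} comes from, after halving) then show that the zeroth-order terms are bounded below by $|\WW^{+}|^{2}-\sqrt{6}|\WW^{+}|^{3}-\tfrac{\sqrt6}{6}|\mathring{Ric}|^{2}|\WW^{+}|$, which is exactly the quantity assumed nonnegative in \eqref{cond1}.

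Next I would combine this with the known fact (from \cite{MS} or the potential-function growth estimates of H.-D. Cao--Zhou) that $|\WW^{+}|^{2}$ and the relevant curvature quantities are integrable against $e^{-f}$, so that integrating $\tfrac12\Delta_f|\WW^{+}|^{2}\,e^{-f}$ over $M$ vanishes (no boundary term, by the weighted Stokes theorem for solitons). Under \eqref{cond1}, every remaining term on the right is nonnegative, forcing $\nabla\WW^{+}\equiv 0$ and equality in each of the algebraic pinching inequalities pointwise. Parallel $\WW^{+}$ already gives a strong structural constraint: by Derdzi\'nski-type arguments (or the analysis of harmonic self-dual Weyl tensor in \cite{Wu,CMM}), either $\WW^{+}\equiv 0$, or $\WW^{+}$ has (up to sign) constant eigenvalues with at most two distinct values, and in the latter case the metric is locally K\"ahler with respect to the eigenform of the simple eigenvalue. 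The case $\WW^{+}\equiv0$ is the half-conformally-flat case, handled by Chen--Wang \cite{CW}, yielding the finite quotients of $\Bbb{S}^4$, $\Bbb{R}^4$, and $\Bbb{S}^{3}\times\Bbb{R}$.

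In the remaining K\"ahler case one exploits the equality in $|\langle(\mathring{Ric}\odot\mathring{Ric})^{+},\WW^{+}\rangle| = \tfrac{\sqrt6}{3}|\mathring{Ric}|^{2}|\WW^{+}|$: for a four-dimensional K\"ahler metric $\WW^{+}$ is determined by the scalar curvature and $|\WW^{+}|^{2}=\tfrac{1}{24}R^{2}$, and the equality case of the Kulkarni--Nomizu estimate pins down $\mathring{Ric}$ to be (fiberwise) of a very restricted algebraic type relative to the K\"ahler form, essentially forcing $\mathring{Ric}$ to have at most two nonzero eigenvalues paired by $J$. Feeding this back into the soliton equation and invoking the classification of K\"ahler GSRS with this degeneracy (the compact case gives Einstein metrics with $\nabla\WW^{+}\equiv0$, i.e.\ case (iii), and in the noncompact/non-Einstein case one runs the argument of \cite{CRZ} to identify the splitting $\Bbb{S}^{2}\times\Bbb{R}^{2}$ in case (iv)). I expect the main obstacle to be precisely this last algebraic step: verifying that the \emph{sharp} constant in the eigenvalue estimate $\langle(\mathring{Ric}\odot\mathring{Ric})^{+},\WW^{+}\rangle\le \tfrac{\sqrt6}{3}|\mathring{Ric}|^{2}|\WW^{+}|$ holds with equality exactly on the K\"ahler locus, and then extracting enough pointwise information from the equality case to close the soliton analysis without any a priori Ricci bound — this is the place where the improvement over \eqref{cond10} must be paid for, and it likely requires a careful diagonalization of $\mathring{Ric}$ adapted to an eigenbasis of $\WW^{+}$ together with the second Bianchi identity to propagate the rigidity.
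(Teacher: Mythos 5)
Your proposal follows essentially the same route as the paper: the Weitzenb\"ock formula for $\Delta_f|\WW^+|^2$ (Proposition \ref{propB}), the determinant bound $\det\WW^+\le\frac{\sqrt6}{18}|\WW^+|^3$, the new sharp estimate $\langle(\mathring{Ric}\odot\mathring{Ric})^+,\WW^+\rangle\le\frac{\sqrt6}{3}|\mathring{Ric}|^2|\WW^+|$, a weighted integration argument to force $\nabla\WW^+\equiv0$, and then the classifications of Chen--Wang (for $\WW^+\equiv0$) and Wu--Wu--Wylie (for $\WW^+$ parallel and nonzero). Two of your steps, however, are not yet proofs. First, the estimate with constant $\frac{\sqrt6}{3}$ is precisely the paper's main new ingredient (Lemma \ref{lemK}), and you correctly flag it as the crux but leave it unestablished; the paper proves it by viewing $\widehat{\WW^+}$ as a curvature operator of the second kind, diagonal in the basis $\{\omega_i\alpha_j\}$ of $S^2_0$ with eigenvalues $-4(\lambda_i+\mu_j)$, and then maximizing $\sum_i\lambda_iA_i^2$ over the simplex $A_1^2+A_2^2+A_3^2=|\mathring{Ric}|^2/4$, the maximum being attained at a vertex with $\vec\lambda\parallel(2,-1,-1)$. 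Without this computation the hypothesis \eqref{cond1} cannot be matched to the Weitzenb\"ock terms, so this gap must be filled before the argument closes. Your intermediate constants (e.g.\ $-2\,\mathrm{tr}((\WW^+)^3)$ versus $-36\det\WW^+$) also do not obviously reproduce the coefficient $\sqrt6$ in front of $|\WW^+|^3$ and should be checked against the normalization $2\WW^+=\sum_i\lambda_i\,\omega_i\otimes\omega_i$.

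Second, the integrability you invoke is misattributed: Munteanu--Sesum controls $\int_M|Ric|^2e^{-f}dV_g$, not the Weyl tensor, and the Cao--Zhou growth estimates alone do not bound $|\WW^+|$. The correct observation (and the one the paper uses) is that \eqref{cond1} itself forces $|\WW^+|^2(1-\sqrt6|\WW^+|)\ge0$, hence $|\WW^+|\le 1/\sqrt6$ pointwise, which together with the finite weighted volume $\int_Me^{-f}dV_g<\infty$ gives $|\WW^+|\in L^2(e^{-f}dV_g)$; the ``weighted Stokes theorem'' then has to be implemented by the standard cutoff argument applied to $\rho^2|\WW^+|\Delta_f|\WW^+|$, which first yields $|\WW^+|$ constant and only then, by returning to the pointwise identity, $\nabla\WW^+\equiv0$. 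Finally, your concern about the equality case of the sharp estimate is unnecessary for this theorem: once $\nabla\WW^+\equiv0$ with $\WW^+\ne0$ one needs no equality analysis at all, since $\div\WW^+=0$ and the classification of \cite{Wu} directly yields the compact Einstein case (iii) or the finite quotient of $\Bbb S^2\times\Bbb R^2$ in case (iv); the equality case of Lemma \ref{lemK} is only needed for Theorem \ref{thmB} and Corollary \ref{corC}.
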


A key ingredient in the proof of Theorem \ref{thmA} is the novel sharp estimate 
\begin{equation}
	\label{epkl1}
	\langle (\mathring{Ric}\odot \mathring{Ric})^{+},\WW^{+}\rangle \leq \frac{\sqrt{6}}{3}|\mathring{Ric}|^{2}|\WW^{+}|.
\end{equation} Interestingly, its proof is based on the notion of the curvature of the second kind and its algebraic consequences outlined by X. Cao, Gursky and Tran \cite{CGT} in the resolution of a conjecture of Nishikawa.

\begin{remark} \label{rmk1}
We point out that the equality in Theorem \ref{thmA} holds for $\Bbb{S}^2\times \Bbb{R}^2:$ \[|\WW^{+}|^2-\sqrt{6}|\WW^+|^3 =\frac{\sqrt{6}}{6}|\mathring{Ric}|^2|\WW^{+}|=\frac{1}{48}.\] Thereby, Theorem \ref{thmA} can be seen as a gap theorem for $\Bbb{S}^2 \times \Bbb{R}^2.$
\end{remark}

\begin{remark}
A relevant observation is that, by using essentially the same arguments as in the proof of Theorem \ref{thmA}, if we replace the assumption (\ref{cond1}) by the condition 

\begin{equation}
\label{cond12}
|\WW^{+}|-\sqrt{6}|\WW^+|^2 \geq \frac{\sqrt{6}}{6}|\mathring{Ric}|^2.
\end{equation} we derive the same classification of Theorem \ref{thmA} with the exception of the round cylinder $\Bbb{S}^{3}\times\Bbb{R}.$ 

\end{remark}

In our next result, we establish a general rigidity theorem for four-dimensional K\"ahler-Ricci solitons. It is known from \cite{derd1} that a four-dimensional K\"ahler metric with a natural of orientation must satisfy
\[|\WW^+|^2=\frac{R^2}{24}.\] 
We will show that on a four-dimensional GSRS, if the quotient $\frac{|\WW^+|^2}{R^2}$ is close enough to $1/24$  from below, then the manifold must be either locally K\"ahler or one of the standard models. To be precise, we have the following result.

\begin{theorem}
\label{thmB}
Let $(M^4,\,g,\,f)$ be a four-dimensional complete (nonflat) gradient shrinking Ricci soliton (\ref{maineq}) satisfying the pinching condition 
\begin{equation}
\label{cond2}
\frac{1}{24} \geq \frac{|\WW^{+}|^2}{R^2} \geq \frac{\sqrt{6}}{3}\frac{|\mathring{Ric}|^2}{R^3}|\WW^{+}|.
\end{equation} Then $(M^4,\,g,\,f)$ is either
\begin{enumerate}
\item[(i)] isometric to $\Bbb{S}^4,$ or $\Bbb{CP}^2,$ or
\item[(ii)] a finite quotient of the round cylinder $\Bbb{S}^{3}\times\Bbb{R},$ or
\item[(iii)] a locally K\"ahler-Ricci soliton.
\end{enumerate}
\end{theorem}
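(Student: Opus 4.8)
The plan is to reduce Theorem \ref{thmB} to Theorem \ref{thmA} by scale-invariance, then analyze the borderline cases produced by Theorem \ref{thmA} together with the extra constraint $|\WW^+|^2 \le R^2/24$. First I would observe that the right-hand inequality in \eqref{cond2}, after multiplying through by $R^3$, reads $R|\WW^+|^2 \ge \tfrac{\sqrt6}{3}|\ric|^2|\WW^+|$; combined with the left-hand inequality $R^2 \ge 24|\WW^+|^2$, i.e. $R \ge \sqrt{24}\,|\WW^+| = 2\sqrt6\,|\WW^+|$, one should be able to massage these into exactly the pinching hypothesis \eqref{cond1}, namely $|\WW^+|^2 - \sqrt6|\WW^+|^3 \ge \tfrac{\sqrt6}{6}|\ric|^2|\WW^+|$. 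Concretely, from $R \ge 2\sqrt6|\WW^+|$ one gets $\sqrt6|\WW^+|^2 \le \tfrac{R|\WW^+|}{2}$, so it suffices that $|\WW^+|^2 - \tfrac12 R|\WW^+| \ge \tfrac{\sqrt6}{6}|\ric|^2|\WW^+|$ fails to be the right bookkeeping — here one must be careful that $|\WW^+|\le R^2/24$ is not homogeneous of the same degree, so the correct move is to work at points where $R>0$ (the scalar curvature of a nonflat GSRS is positive by Chen \cite{Chen}) and use \eqref{cond2} directly to verify \eqref{cond1} pointwise after clearing denominators. Once \eqref{cond1} holds, Theorem \ref{thmA} applies and $(M^4,g,f)$ is one of the four listed types.

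Next I would eliminate cases (iii) and (iv) of Theorem \ref{thmA} using the left half of \eqref{cond2}, which says the metric is, in a pointwise sense, at least as ``Kähler-pinched'' as the equality case $|\WW^+|^2 = R^2/24$. For case (iv), the K\"ahler-Ricci soliton $\Bbb S^2\times\Bbb R^2$ is already locally K\"ahler, so it falls into conclusion (iii) of Theorem \ref{thmB} and needs no exclusion. For case (iii) — a compact Einstein manifold with $\nabla\WW^+\equiv 0$ and $\WW^+$ having exactly two distinct eigenvalues — I would use the structure of such $\WW^+$: on an oriented Einstein $4$-manifold $\WW^+$ is a parallel symmetric trace-free endomorphism of $\Lambda^+$, and ``two distinct eigenvalues'' forces eigenvalues of the form $(2\lambda,-\lambda,-\lambda)$ up to sign and ordering. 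By the classical result of Derdzi\'nski \cite{derd1}, an oriented Einstein $4$-manifold satisfies $|\WW^+|^2 = R^2/24$ pointwise precisely when it is K\"ahler with respect to some complex structure compatible with the orientation (this is exactly the eigenvalue pattern $(2\lambda,-\lambda,-\lambda)$ with $\lambda = R/12$). Comparing the Einstein case $|\mathring{Ric}|\equiv 0$ with \eqref{cond2}, the right inequality becomes vacuous and the left inequality $|\WW^+|^2 \le R^2/24$ must in fact be saturated — otherwise there is no contradiction, so here I need to argue more carefully: if $|\WW^+|^2 < R^2/24$ strictly somewhere on the Einstein manifold of type (iii), then... the cleanest route is to note that for the two-eigenvalue parallel $\WW^+$ the ratio $|\WW^+|^2/R^2$ is a constant $c$, and $c \le 1/24$ combined with the eigenvalue constraint forces $c = 1/24$, whence by Derdzi\'nski the manifold is K\"ahler, i.e. conclusion (iii) of Theorem \ref{thmB}. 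I expect $\Bbb S^2\times\Bbb S^2$ to be ruled out here since its $\WW^+$ is parallel with eigenvalues proportional to $(2,-1,-1)$? — actually $\Bbb S^2\times\Bbb S^2$ is K\"ahler, so again it lands in conclusion (iii); the only genuine content is checking that every manifold in Theorem \ref{thmA}(iii) satisfying the left half of \eqref{cond2} is locally K\"ahler, which Derdzi\'nski gives.

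Finally I would tidy the remaining cases: $\Bbb S^4$, $\Bbb{CP}^2$, and finite quotients of $\Bbb R^4$ and $\Bbb S^3\times\Bbb R$ from Theorem \ref{thmA}. The Gaussian soliton $\Bbb R^4$ is flat and hence excluded by the standing hypothesis that $(M^4,g,f)$ is nonflat; $\Bbb S^4$ and $\Bbb{CP}^2$ are conclusion (i) of Theorem \ref{thmB}; $\Bbb S^3\times\Bbb R$ is conclusion (ii). (One should double-check that $\Bbb S^3 \times \Bbb R$ and $\Bbb S^4$ actually satisfy \eqref{cond2} — for $\Bbb S^4$, $\WW^+\equiv 0$ and $\mathring{Ric}\equiv 0$, so \eqref{cond2} holds trivially; for $\Bbb S^3\times\Bbb R$, $\WW^+\equiv 0$ as well since it is locally conformally flat, so again \eqref{cond2} is trivially satisfied — these are consistency checks, not obstructions.) The main obstacle I anticipate is the bookkeeping in the first paragraph: verifying that the non-homogeneous pair of inequalities \eqref{cond2} genuinely implies the homogeneous pinching \eqref{cond1} at every point, which requires carefully using $R>0$ on the nonflat soliton and possibly the fact (from Theorem \ref{thmA}'s proof via \eqref{epkl1}) that the relevant curvature quantities are controlled; and secondly, pinning down that Theorem \ref{thmA}(iii) under the extra constraint is exactly the K\"ahler–Einstein locus, for which the precise two-eigenvalue normalization and Derdzi\'nski's characterization must be invoked correctly.
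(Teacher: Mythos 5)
Your reduction of Theorem \ref{thmB} to Theorem \ref{thmA} fails at the first step: the pointwise implication $(\ref{cond2})\Rightarrow(\ref{cond1})$ is false, and you essentially notice this (``fails to be the right bookkeeping'') without repairing it. Concretely, dividing (\ref{cond1}) by $|\WW^+|$ one needs $|\WW^+|-\sqrt{6}|\WW^+|^2\geq \frac{\sqrt{6}}{6}|\mathring{Ric}|^2$, while the two halves of (\ref{cond2}) only give $|\WW^+|\leq \frac{R}{2\sqrt 6}$ and $\frac{\sqrt 6}{6}|\mathring{Ric}|^2\leq \frac{R|\WW^+|}{2}$; combining these would require $1-\sqrt{6}|\WW^+|\geq \frac{R}{2}$, which has no reason to hold (already at a point with $R$ near $2$, $|\WW^+|$ near $R/(2\sqrt 6)$ and $|\mathring{Ric}|\neq 0$ small, (\ref{cond2}) holds but (\ref{cond1}) is violated). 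The two conditions have different homogeneities and neither implies the other; this is also visible from the conclusions, since (\ref{cond1}) yields a finite list of models while Theorem \ref{thmB}(iii) allows the much broader class of all locally K\"ahler solitons --- if your reduction were valid, Theorem \ref{thmB} would be a strictly weaker corollary of Theorem \ref{thmA}, which it is not.

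The actual proof works with the scale-invariant quantity $u=|\WW^+|/R$ and the modified weight $\Psi=f-2\ln R$. Lemma \ref{lemB} gives a Bochner-type formula for $\Delta_\Psi(|\WW^+|^2/R^2)$, and after applying the Kato inequality, (\ref{eqdet}) and Lemma \ref{lemK}, the lower bound factors as
\[
2\,u\,\Delta_{\Psi}u\;\geq\;\frac{1}{R}\left(1-2\sqrt{6}\,\frac{|\WW^+|}{R}\right)\left(|\WW^+|^2-\frac{\sqrt{6}}{3}\frac{|\mathring{Ric}|^2}{R}|\WW^+|\right),
\]
where the first factor is nonnegative precisely by the left half of (\ref{cond2}) and the second by the right half; this multiplicative use of the two inequalities is the key idea your proposal is missing. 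One then shows $u\in L^2_\Psi$ (using $\int|Ric|^2e^{-f}<\infty$ from Munteanu--Sesum), runs the cut-off argument to conclude $u$ is constant, and in the case $u\neq 0$ extracts the rigidity from the equality cases of Kato and of Lemma \ref{lemK} (forcing $\nabla\WW^+=\nu\otimes\WW^+$ and $\lambda_2=\lambda_3$), whence Lemma \ref{kahlerform} produces a parallel self-dual $2$-form and the locally K\"ahler conclusion. Your concluding case analysis (handling $\Bbb{S}^4$, $\Bbb{CP}^2$, $\Bbb{S}^3\times\Bbb{R}$, excluding the flat Gaussian soliton, and invoking Derdzi\'nski for the Einstein case) is reasonable in spirit but is built on a reduction that does not exist.
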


\begin{remark}
{\it Locally K\"ahler} in Theorem \ref{thmB} means K\"ahler after possibly pulling back to a double cover of $M^4.$
\end{remark}
\begin{remark}
We highlight that the condition in Theorem \ref{thmB} is sharp as both inequa\-li\-ties in (\ref{cond2}) become equalities for the K\"ahler-Ricci soliton $\Bbb{S}^2\times \Bbb{R}^2$. 
\end{remark} 

In the compact case, the reverse of (\ref{cond2}) leads to the following characterization.

\begin{corollary}
\label{corC}
Let $(M^4,\,g,\,f)$ be a four-dimensional compact gradient shrinking Ricci soliton (\ref{maineq}) satisfying 
\begin{equation}
\label{cond3}
\frac{1}{24} \leq \frac{|\WW^{+}|^2}{R^2} \leq \frac{\sqrt{6}}{3}\frac{|\mathring{Ric}|^2}{R^3}|\WW^{+}|.
\end{equation} Then $(M^4,\,g,\,f)$ is a locally K\"ahler-Ricci soliton.
\end{corollary}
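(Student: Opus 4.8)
The idea is to run the proof of Theorem \ref{thmB} ``in reverse'' on the closed manifold $M^4$, and then to invoke Theorem \ref{thmB} itself. I would first record the elementary consequences of compactness and of (\ref{cond3}). Tracing (\ref{maineq}) gives $R+\Delta f=2$, so a compact \emph{flat} shrinker is impossible (it would force $\Delta f\equiv 2$ on a closed manifold); combined with the soliton identity $\Delta_f R=R-2|\mathrm{Ric}|^2$ and the strong maximum principle, this yields $R>0$ everywhere on $M^4$. Then the first inequality in (\ref{cond3}) forces $|\WW^{+}|^2\ge\tfrac1{24}R^2>0$, so $\WW^{+}$ is nowhere zero, and the second inequality — equivalently $R\,|\WW^{+}|\le\tfrac{\sqrt6}{3}|\mathring{Ric}|^2$ after clearing denominators — then forces $|\mathring{Ric}|^2>0$, so $(M^4,g)$ is nowhere Einstein.

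The heart of the matter is that the pointwise condition (\ref{cond3}) in fact forces \emph{equality} throughout. The proof of Theorem \ref{thmB} establishes, via the $f$-Bochner formula for $|\WW^{+}|^2$, the refined Kato inequality, the sharp algebraic bound $18\det\WW^{+}\le\sqrt6|\WW^{+}|^3$ for trace-free self-adjoint endomorphisms of $\Lambda^+$, and the sharp estimate (\ref{epkl1}), that under (\ref{cond2}) the curvature quotient entering that condition is $f$-subharmonic, whence a Liouville-type argument makes it constant and collapses all the intermediate inequalities to equalities. I would re-run this computation under the \emph{reversed} pinching (\ref{cond3}): now the same identity, with the two algebraic estimates pointing in the direction opposite to (\ref{cond3}), makes that quotient $f$-\emph{super}harmonic. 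Here is where the compactness of $M^4$ is used: an $f$-superharmonic function on a closed manifold is constant (integrate against $e^{-f}\,dV$), so $\Delta_f$ of the quotient vanishes identically, and substituting back forces equality in every estimate used, i.e. pointwise on $M^4$
\[
\frac1{24}=\frac{|\WW^{+}|^2}{R^2}=\frac{\sqrt6}{3}\frac{|\mathring{Ric}|^2}{R^3}|\WW^{+}| .
\]
In particular the hypothesis (\ref{cond2}) of Theorem \ref{thmB} now holds (as an equality), and since $(M^4,g,f)$ is a nonflat complete gradient shrinking Ricci soliton, Theorem \ref{thmB} applies: $M^4$ is isometric to $\Bbb{S}^4$ or $\Bbb{CP}^2$, or a finite quotient of $\Bbb{S}^3\times\Bbb{R}$, or a locally K\"ahler-Ricci soliton. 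The first two possibilities are Einstein, hence excluded because $\mathring{Ric}$ never vanishes; a finite quotient of $\Bbb{S}^3\times\Bbb{R}$ is noncompact, hence excluded because $M^4$ is closed. Therefore $(M^4,g,f)$ is a locally K\"ahler-Ricci soliton.

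The only real obstacle is the middle step: one must check that the $f$-Bochner computation underlying Theorem \ref{thmB} genuinely \emph{reverses} — that is, that on a closed manifold the reversed condition (\ref{cond3}) does not merely remain \emph{consistent} with the identity $\int_{M}\Delta_f(\cdot)\,e^{-f}\,dV=0$ but actually \emph{forces} equality. Concretely this amounts to verifying that the three ``defect'' terms (the gap in the refined Kato inequality, the gap in $18\det\WW^{+}\le\sqrt6|\WW^{+}|^3$, and the gap in (\ref{epkl1})) all enter the weighted integral with one and the same sign, so that their sum can only vanish when each does. Since this is precisely the identity already derived en route to Theorem \ref{thmB}, the genuinely new content is only this sign bookkeeping on a closed manifold, together with the observation that (\ref{cond3}) excludes the Einstein and the noncompact models appearing in Theorem \ref{thmB}. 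As a consistency check, $\Bbb{S}^2\times\Bbb{R}^2$ saturates all of these equalities but is noncompact, which is exactly why (\ref{cond3}) is the sharp reversed form of the hypothesis of Theorem \ref{thmB}.
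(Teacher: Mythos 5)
Your overall strategy---reuse the differential inequality from the proof of Theorem \ref{thmB}, use compactness to force constancy of the curvature quotient, and then extract rigidity from the equality cases---is the right shape, and your preliminary observations ($R>0$, $\WW^+\neq 0$, $\mathring{Ric}\neq 0$) are correct. But the mechanism you describe for the middle step is wrong in sign, and this propagates into a genuine gap at the end. The key inequality (\ref{rgt5}) is a \emph{lower} bound
\[
2\tfrac{|\WW^+|}{R}\,\Delta_{\Psi}\bigl(\tfrac{|\WW^+|}{R}\bigr)\;\geq\;\tfrac{1}{R}\Bigl(1-2\sqrt{6}\tfrac{|\WW^+|}{R}\Bigr)\Bigl(|\WW^+|^2-\tfrac{\sqrt{6}}{3}\tfrac{|\mathring{Ric}|^2}{R}|\WW^+|\Bigr),
\]
obtained from the Kato inequality, the determinant bound (\ref{eqdet}) and Lemma \ref{lemK}; these three estimates always point the same way and do \emph{not} reverse when the pinching hypothesis is reversed. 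What happens under (\ref{cond3}) is that \emph{both} factors of the product on the right become nonpositive, so the product stays nonnegative and $|\WW^+|/R$ is again $\Psi$-\emph{sub}harmonic (not superharmonic). The maximum principle on the closed manifold still gives constancy, and then the squeeze $0\geq\cdots\geq 0$ does force equality in Kato, in (\ref{eqdet}) and in (\ref{EqHu})---so your worry about the ``defect terms'' having a common sign is resolved affirmatively.

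The gap is in your final step. Constancy of $|\WW^+|/R$ only forces the \emph{product} of the two factors to vanish, so at least one factor vanishes identically; it does \emph{not} force both inequalities in (\ref{cond3}) to be equalities, and hence does not put you in the hypothesis (\ref{cond2}) of Theorem \ref{thmB}. (If, say, only the second factor vanishes, then $|\WW^+|^2/R^2\geq 1/24$ persists and (\ref{cond2}) fails.) So Theorem \ref{thmB} cannot be invoked as a black box. The paper instead bypasses it entirely: from the forced equality in the Kato inequality one gets $\nabla\WW^+=\nu\otimes\WW^+$, from the forced equality in Lemma \ref{lemK} one gets $\lambda_2=\lambda_3$, and since $\WW^+\neq 0$ (by the first inequality in (\ref{cond3}) and $R>0$), Lemma \ref{kahlerform} yields the locally K\"ahler conclusion directly. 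Your argument is repaired by replacing the appeal to Theorem \ref{thmB} with this direct appeal to the equality cases of the lemmas.
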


\begin{remark}
	In the compact case, it is known (see \cite{caoALM11,WZ1}) that a nontrivial K\"ahler-Ricci soliton is Fano (i.e., the first Chern class is positive) and the Futaki-invariant is nonzero. Moreover, it follows from the works of Tian and Zhu \cite{TZ1,TZ2} that the soliton vector field is unique up to holomorphic automorphisms of the underlying complex manifold.
\end{remark}

This article is organized as follows. In Section \ref{Background}, we review some basic facts and useful lemmas on four-dimensional GSRS. Section \ref{keyestimates} contains relevant contributions with several novel estimates. Finally, Section \ref{rigidityR} collects the proofs of our main results. \\

{\bf Acknowledgment.} X. Cao was partially supported by the Simons Foundation (\#585201). E. Ribeiro was partially supported by CNPq/Brazil (\#309663/2021-0), CAPES/Brazil and FUNCAP/Brazil (\# PS1-0186-00258.01.00/21). H. Tran was partially supported by a Simons Collaboration Grant and NSF grant DMS-2104988. The authors would like to thank Detang Zhou for helpful comments on a preliminary version of the paper.\\
 
\section{Background}
\label{Background}

In this section, we review some basic facts and present key results that will be used for our main theorems. 

Throughout this paper, for an $n$-dimensional Riemannian manifold $(M^n,\,g),$ we denote $\Rc$, $\mathring{\Rc},$ $R$ and $K$ to be the Ricci, traceless Ricci, scalar and sectional curvatures, respectively. Given a point $p\in M$, let $\{e_1,\,\ldots\, ,e_n\}$ be a local normal orthogonal coordinate of $T_pM,$ then $\{e^1,\,\ldots\,,e^n\}$ denotes the associated dual basis. 

For a finite dimensional vector space $V$, ${S}^2 (V)$ and $\Lambda^2(V)$ are the space of symmetric and anti-symmetric linear maps on $V$ (called symmetric $2$-tensors and $2$-forms, respectively). In particular, $S^2_0(V)$ contains only traceless symmetric maps. When $V=T_pM$, we normally subdue the vector space for convenience. The convention for the inner products is as follows. For $u, v\in S^2(V)$ and $\alpha, \beta\in \Lambda^2(V)$:
	\begin{align*} 
	\left\langle{u, v}\right\rangle &= \text{Tr}\,(u^Tv)\,\,\,\,\hbox{and}\,\,\,\,
	~~~~\left\langle{\alpha, \beta}\right\rangle = \frac{1}{2}\text{Tr}\,(\alpha^T\beta).
	\end{align*}
	
Let $\mathcal{R}(V)$ be the space of algebraic curvatures, that is, $(4,0)$-tensors satisfying certain symmetric properties and the first Bianchi identity. More precisely, for $T\in \mathcal{R}(V),$ we have 
	\begin{align*}
	T(e_i, e_j, e_k, e_l) = -T(e_j, e_i, e_k, e_l)=-T(e_i, e_j, e_l, e_k)= T(e_k, e_l, e_i, e_j)
	\end{align*} and
	
		\begin{align*}
	0 =T(e_i, e_j, e_k, e_l)+T(e_i, e_k, e_l, e_j)+T(e_i, e_l, e_j, e_k).
	\end{align*}

Thus, any element $T\in \mathcal{R}(V)$ can be considered as an element in either $\text{End}(\Lambda^2(V))$ or $\text{End}(S^2(V))$. They are so called the \textit{curvature of the first} or \textit{second kind}, respectively (see \cite{CGT} and references therein). Precisely, for $\omega\in \Lambda^2(V)$ and $A\in S^2(V),$ we define
\begin{align*}
	T(\omega)(e_i, e_j)&:=\sum_{k<l}T(e_i, e_j, e_k, e_l)\omega (e_k, e_l)
	\end{align*}
	and 
	\begin{align*}
	(\hat{T}A)(e_i, e_k)&:=\sum_{j,l} T(e_i, e_j, e_l, e_k)A(e_j, e_l).
	\end{align*}
	The inner product in $\text{End}(\Lambda^2(V))$ is given by 
	\begin{align*}
	\left\langle{T_1, T_2}\right\rangle &= \sum_{i<j, k<l}T_1(e_{ij}, e_{kl})T_2(e_{ij}, e_{kl})
=\frac{1}{4}\sum_{i,j, k,l} (T_1)_{ijkl} (T_2)_{ijkl}. 
	\end{align*}

	Recall that the Kulkarni-Nomizu product $\odot$: $S^2(V)\times S^2(V)\mapsto \mathcal{R}(V)$ is defined by
\begin{eqnarray}
\label{eq76}
(A \odot B)_{ijkl}= A_{ik}B_{jl}+A_{jl}B_{ik}-A_{il}B_{jk}-A_{jk}B_{il}.
\end{eqnarray} 
		
In terms of the Kulkarni-Nomizu product, we have the following intrinsic relation, its proof follows from a straightforward calculation.

\begin{lemma}
	\label{knproduct}
	Let $A, B\in S^2(V)$, then we have:
\[(\hat{T}A,B)=-\left\langle{T,A\odot B}\right\rangle. \]
\end{lemma}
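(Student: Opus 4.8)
The plan is to prove the identity by a direct computation in a local orthonormal basis, unwinding the definitions of $\hat{T}$, of the Kulkarni--Nomizu product, and of the two inner products involved, and then matching the four terms coming from $A\odot B$ by exploiting the pair-antisymmetries of an algebraic curvature tensor.

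First I would expand the left-hand side. By definition of the curvature of the second kind, $(\hat{T}A)(e_i,e_k)=\sum_{j,l}T(e_i,e_j,e_l,e_k)A(e_j,e_l)=\sum_{j,l}T_{ijlk}A_{jl}$, and since $A,B$ are symmetric the $S^2(V)$-inner product gives $(\hat{T}A,B)=\sum_{i,k}(\hat{T}A)_{ik}B_{ik}=\sum_{i,j,k,l}T_{ijlk}A_{jl}B_{ik}$. Using the antisymmetry $T_{ijlk}=-T_{ijkl}$ in the last pair of indices, this becomes $(\hat{T}A,B)=-\sum_{i,j,k,l}T_{ijkl}A_{jl}B_{ik}$.

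Next I would expand the right-hand side using the normalization $\left\langle{T,S}\right\rangle=\tfrac14\sum_{i,j,k,l}T_{ijkl}S_{ijkl}$ together with $(A\odot B)_{ijkl}=A_{ik}B_{jl}+A_{jl}B_{ik}-A_{il}B_{jk}-A_{jk}B_{il}$, giving four sums. The key step is to check that each of these four sums equals the single sum $\sum_{i,j,k,l}T_{ijkl}A_{jl}B_{ik}$: for the term $T_{ijkl}A_{ik}B_{jl}$ relabel $(i,j,k,l)\mapsto(j,i,l,k)$ and use $T_{jilk}=T_{ijkl}$; for $-T_{ijkl}A_{il}B_{jk}$ swap $i\leftrightarrow j$ and use antisymmetry in the first pair; for $-T_{ijkl}A_{jk}B_{il}$ swap $k\leftrightarrow l$ and use antisymmetry in the last pair; the remaining term $T_{ijkl}A_{jl}B_{ik}$ is already in the desired form. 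Adding the four equal contributions and dividing by $4$ yields $\left\langle{T,A\odot B}\right\rangle=\sum_{i,j,k,l}T_{ijkl}A_{jl}B_{ik}$, which is exactly $-(\hat{T}A,B)$ by the previous paragraph.

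The computation needs no deep input -- only the pair-antisymmetries of $T$ (the first Bianchi identity is not even used), the symmetry of $A$ and $B$, and care with the prefactors in the two inner products. The only real point of attention, and the ``main obstacle'' such as it is, is purely bookkeeping: organizing the relabelings so that the fourfold multiplicity of equivalent Kulkarni--Nomizu terms precisely cancels the factor $\tfrac14$ in the curvature inner product, while noting that the symmetric $2$-tensor inner product $\left\langle{u,v}\right\rangle=\mathrm{Tr}(u^{T}v)=\sum_{i,k}u_{ik}v_{ik}$ carries no prefactor and so introduces no stray constant.
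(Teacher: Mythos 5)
Your computation is correct and is exactly the ``straightforward calculation'' the paper alludes to without writing out: the sign comes from the last-pair antisymmetry in the definition of $\hat{T}$, and the four Kulkarni--Nomizu terms each contribute equally after relabeling, cancelling the factor $\tfrac14$ in the curvature inner product. Nothing further is needed.
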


We further recall  the following curvature decomposition
\begin{eqnarray}
\label{weyl}
R_{ijkl}&=&\WW_{ijkl}+\frac{1}{n-2}\big(R_{ik}g_{jl}+R_{jl}g_{ik}-R_{il}g_{jk}-R_{jk}g_{il}\big) \nonumber\\
 &&-\frac{R}{(n-1)(n-2)}\big(g_{jl}g_{ik}-g_{il}g_{jk}\big),
\end{eqnarray} where $R_{ijkl}$ stands for the Riemann curvature tensor and $W_{ijkl}$ is the Weyl curvature tensor.

\subsection{Four-dimensional Manifolds}

In this subsection, we focus on dimension $n=4.$ The bundle of $2$-forms $\Lambda^2$ can be invariantly decomposed into a direct sum, 
\begin{equation}
\label{lk1}
\Lambda^2=\Lambda^{+}\oplus\Lambda^{-}.
\end{equation} 
Moreover, as observed in \cite{ahs78} (see also \cite[Lemma 2]{derd1}), these components have special structures: 

\begin{lemma}
	\label{lambda2pm}
	Let $(M^4,\,g)$ be an oriented four-dimensional Riemannian manifold and $p$ be a point in $M^4.$ Then the following assertions hold:
	\begin{enumerate}
		\item $\Lambda^{\pm}$ are mutually commuting, each isomorphic to $\mathfrak{so}(3)$. 
		\item Elements of lengths $\sqrt{2}$ in $\Lambda^\pm$ coincides with the almost complex structure compatible with the metric. 
		\item Each oriented orthogonal basis $\{ \omega_1, \omega_2, \omega_3\}$ of $\Lambda^{\pm}$ with $|\omega_1|=|\omega_2|=|\omega_3|=\sqrt{2}$ forms a quaternionic structure in $T_pM$. That is, $$\omega_1^2=\omega_2^2=\omega_3^2=-\text{I} ~and ~\omega_1\omega_2=\omega_3=-\omega_2\omega_1.$$
		\item Given $\omega\in \Lambda^+$, $\alpha\in \Lambda^-$, $\omega\alpha=\alpha\omega$ is an orientation preserving involution of $T_pM$. Its $(\pm 1)$-eigenspaces form an orthogonal decomposition of $T_pM$ into a direct sum of two planes. In particular, we have $\omega\alpha\in S^2_0$. 		
	\end{enumerate}
\end{lemma}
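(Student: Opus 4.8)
Since the statement is pointwise, the plan is to fix the point $p$, identify a $2$-form $\omega$ with the skew-symmetric endomorphism $A_\omega$ of $T_pM$ determined by $g(A_\omega X,Y)=\omega(X,Y)$ (so that every ``product'' appearing in the statement is a composition of endomorphisms), and reduce all four assertions to a single finite multiplication table together with a polarization/transitivity argument. I would begin by choosing an oriented orthonormal basis $\{e_1,e_2,e_3,e_4\}$ of $T_pM$ and introducing the standard bases
\[
\omega_1^{\pm}=e^1\wedge e^2\pm e^3\wedge e^4,\qquad
\omega_2^{\pm}=e^1\wedge e^3\pm e^4\wedge e^2,\qquad
\omega_3^{\pm}=e^1\wedge e^4\pm e^2\wedge e^3
\]
of $\Lambda^{\pm}$, which are mutually orthogonal with $|\omega_i^{\pm}|^2=2$. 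A direct check of the six associated endomorphisms yields the table
\[
A_{\omega_i^{\pm}}A_{\omega_j^{\pm}}=-\delta_{ij}\,\mathrm{Id}\pm\epsilon_{ijk}A_{\omega_k^{\pm}},
\qquad
A_{\omega_i^{+}}A_{\omega_j^{-}}=A_{\omega_j^{-}}A_{\omega_i^{+}} .
\]
Since $O(4)$ acts transitively on oriented orthonormal frames and the splitting $\Lambda^2=\Lambda^{+}\oplus\Lambda^{-}$ is frame-independent, no generality is lost. Assertion (1) then follows at once: bilinearity propagates the commutation relation to all of $\Lambda^{+}\times\Lambda^{-}$, while the bracket on $\Lambda^{\pm}$ has structure constants $\pm2\epsilon_{ijk}$, so $\Lambda^{\pm}\cong\mathfrak{so}(3)$.

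For (2) I would polarize the $i=j$ part of the table to obtain, for $\omega=\sum_i a_i\omega_i^{+}\in\Lambda^{+}$,
\[
A_\omega^{2}=-\Bigl(\sum_i a_i^{2}\Bigr)\mathrm{Id}=-\frac{1}{2}\,|\omega|^{2}\,\mathrm{Id},
\]
and similarly on $\Lambda^{-}$; hence $A_\omega^{2}=-\mathrm{Id}$ precisely when $|\omega|=\sqrt2$, and since a skew-symmetric $A$ with $A^{2}=-\mathrm{Id}$ automatically satisfies $g(A\cdot,A\cdot)=g$, the length-$\sqrt2$ elements of $\Lambda^{\pm}$ are exactly the $g$-compatible almost complex structures. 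For (3), given an oriented orthonormal triple $\{\omega_1,\omega_2,\omega_3\}$ of $\Lambda^{+}$ rescaled to length $\sqrt2$, I would use (2) and a further polarization with $\omega_a\perp\omega_b$ to get $A_{\omega_a}A_{\omega_b}+A_{\omega_b}A_{\omega_a}=0$ for $a\neq b$; then $A_{\omega_1}A_{\omega_2}$ is skew-symmetric with square $-A_{\omega_1}^{2}A_{\omega_2}^{2}=-\mathrm{Id}$, it commutes with all of $\Lambda^{-}$ by (1), and it is orthogonal to $\omega_1$ and $\omega_2$. Because $\mathfrak{so}(4)=\Lambda^{+}\oplus\Lambda^{-}$ is a sum of two commuting ideals each with trivial center, the centralizer of $\Lambda^{-}$ among skew-symmetric endomorphisms is exactly $\Lambda^{+}$; so $A_{\omega_1}A_{\omega_2}\in\Lambda^{+}$ has length $\sqrt2$ and equals $\varepsilon\,\omega_3$ for some $\varepsilon=\pm1$. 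Finally $\varepsilon=\frac{1}{2}\langle A_{\omega_1}A_{\omega_2},\omega_3\rangle$ is $\{\pm1\}$-valued, varies continuously over the connected space of oriented orthonormal triples of $\Lambda^{+}$ (on which $SO(3)$ acts transitively), and equals $+1$ on $\{\omega_i^{+}\}$ by the table, so $\varepsilon\equiv1$, which is the quaternionic relation.

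For (4) I would take $\omega\in\Lambda^{+}$ and $\alpha\in\Lambda^{-}$ normalized to length $\sqrt2$, so that $\omega^{2}=\alpha^{2}=-\mathrm{Id}$ by (2). Then $\omega\alpha=\alpha\omega$ by (1), $(\omega\alpha)^{2}=\omega^{2}\alpha^{2}=\mathrm{Id}$, and $(\omega\alpha)^{T}=\alpha^{T}\omega^{T}=(-\alpha)(-\omega)=\omega\alpha$, so $\omega\alpha$ is a symmetric involution; it preserves orientation because $\omega,\alpha\in SO(4)$ (orthogonal with square $-\mathrm{Id}$) force $\det(\omega\alpha)=1$, and $\mathrm{Tr}(\omega\alpha)=-2\langle\omega,\alpha\rangle=0$ because $\Lambda^{+}\perp\Lambda^{-}$ in $\Lambda^{2}$. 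Hence $\omega\alpha\in S^{2}_{0}$, its $\pm1$-eigenspaces are two-dimensional and mutually orthogonal, and they give the claimed orthogonal decomposition of $T_pM$ into a pair of planes.

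I expect the only genuinely delicate point to be the sign in (3) --- deciding that the third member of a quaternionic triple is $+\omega_3$ rather than $-\omega_3$ --- which is why I would isolate the continuity/connectedness argument instead of relying on a single computation; the supporting fact that $\Lambda^{+}$ is exactly the centralizer of $\Lambda^{-}$ (needed to locate $A_{\omega_1}A_{\omega_2}$) likewise deserves its one-line justification from the ideal structure of $\mathfrak{so}(4)$. Everything else is bilinear algebra once the six-element multiplication table has been written down.
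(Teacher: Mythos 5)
Your proposal is correct. Note that the paper does not prove this lemma at all --- it is stated as background and attributed to Atiyah--Hitchin--Singer \cite{ahs78} and Derdzinski \cite{derd1} --- so there is no in-paper argument to compare against; what you have written is the standard self-contained verification (multiplication table for the six endomorphisms $A_{\omega_i^{\pm}}$ in a normal frame, then polarization), and all the computations check out, including the two points you correctly flag as delicate: the sign in the quaternionic relation (your connectedness argument over the $SO(3)$-orbit of oriented triples is a clean way to settle it) and the identification of $\Lambda^{+}$ as the centralizer of $\Lambda^{-}$ via the ideal decomposition $\mathfrak{so}(4)=\Lambda^{+}\oplus\Lambda^{-}$. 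Two small points worth tightening: in (2) you assert the length-$\sqrt{2}$ elements are \emph{exactly} the compatible almost complex structures, but you only prove the forward inclusion --- for the converse, write a compatible $J$ as $A_{\omega^{+}}+A_{\omega^{-}}$ and observe that $J^{2}=-\tfrac{1}{2}\bigl(|\omega^{+}|^{2}+|\omega^{-}|^{2}\bigr)\mathrm{Id}+2A_{\omega^{+}}A_{\omega^{-}}$ forces the symmetric traceless term $A_{\omega^{+}}A_{\omega^{-}}$ to vanish, hence one component is zero; and in (4) the lemma's statement omits the normalization $|\omega|=|\alpha|=\sqrt{2}$, which you silently (and correctly) impose --- it is needed for $\omega\alpha$ to be an involution.
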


The decomposition (\ref{lk1}) is in particular conformally invariant and induces a decomposition for the Weyl curvature: 
\begin{equation}
\label{ewq}
\WW = \WW^+\oplus \WW^-,
\end{equation} where $\WW^\pm:\Lambda^\pm \longrightarrow\Lambda^\pm $ are called the {\it self-dual} part and {\it anti-self-dual} part of the Weyl tensor, respectively. Furthemore, at a point $p\in M^4$, one can diagonalize $\WW^\pm$ with eigenforms $\{\omega_i\}_{i=1}^3 \in \Lambda^+$ and $\{\alpha_i\}_{i=1}^3 \in \Lambda^-$ such that $\lambda_i$ and $\mu_{i},$ $1\le i \le 3,$ are the respective eigenvalues. In particular, one observes that
\begin{equation}
\label{eigenvalues}
\begin{cases}
 \lambda_1\geq \lambda_2 \geq \lambda_3 \,\,\,  \hbox{and}\,\,\, \lambda_1 +\lambda_2 +\lambda_3= 0, &\\
\mu_1\geq \mu_2 \geq \mu_3\,\,\,  \hbox{and}\,\,\,u_1 +\mu_2 +\mu_3= 0.&
\end{cases}
\end{equation} 
So, it follows that 
\begin{equation}
	\label{eigenW}2\WW=\sum_{i=1}^3\lambda_i \omega_i\otimes \omega_i+\mu_i \alpha_i\otimes \alpha_i.\end{equation}
In a local neighborhood, we construct a local frame so that (\ref{eigenW}) holds. In par\-ti\-cular, $\Lambda^{\pm}$ is invariant under parallel displacement. By using Derdzinski's argument \cite[p. 414]{derd1}, we arrive at, for  $(i, j, k)$ an orientation-preserving permutation of $(1, 2, 3)$,
\begin{align}
	\nabla \omega_i &= a_j \otimes\omega_k - a_k \otimes\omega_j, \nonumber \\
\label{nablaW}
	2\nabla \WW^+ &= \sum_{i=1}^3 \left(d\lambda_i \otimes \omega_i+ (\lambda_i-\lambda_k)a_j \otimes \omega_k - (\lambda_i-\lambda_j)a_k \otimes\omega_j\right)\otimes \omega_i.
\end{align}

The following Kato inequality will be also useful.
\begin{lemma}
	\label{Katoinequality}
	Let $(M^{4},\,g)$ be an oriented four-dimensional Riemannian ma\-ni\-fold. Then we have:
	\[|\nabla |\WW^+|| \leq |\nabla \WW^+|.\]
	Equality holds if and only if there is an one-form $\nu$ such that
	$\nabla \WW^+=\nu \otimes \WW^+.$ 
\end{lemma}

We will also need to use the following algebraic inequality 
\begin{equation}
	\label{eqdet}
	\det \WW^{+}\leq \frac{\sqrt{6}}{18}|\WW^{+}|^{3},
\end{equation} moreover, equality holds if and only if $\lambda_{3}=\lambda_{2}=-\frac{1}{2}\lambda_{1}.$ 

\subsection{Two-forms and Two-tensors} In dimension four, there is an interesting connection between $2$-tensors and $2$-forms. A choice of orthonormal bases of $\Lambda^{\pm}$ is given by
\begin{align}
\label{Hodgebase}
\mathbb{B}^+ &=\frac{1}{{\sqrt{2}}}(e_{12}+e_{34}, e_{13}-e_{24}, e_{14}+e_{23})=\frac{1}{{\sqrt{2}}}(\omega_1, \omega_2, \omega_3),\\
\mathbb{B}^- &=\frac{1}{{\sqrt{2}}}(e_{12}-e_{34}, e_{13}+e_{24}, e_{14}-e_{23})=\frac{1}{{\sqrt{2}}}(\alpha_1, \alpha_2, \alpha_3).\nonumber
\end{align} By Lemma \ref{lambda2pm}, their cross products form a basis for $S^2_0(V),$ i.e., 
\begin{equation}
\label{basis2tensordim4}
\mathbb{B}_2=\{\omega_i \alpha_j\}=\{h_1,\ldots,\,h_9\}.
\end{equation} 
More specifically, one has

\begin{minipage}[c]{0.49\linewidth}
\centering
\begin{equation*}
h_1=\omega_1\alpha_1= \left( \begin{array}{cccc}
-1 & & &   \\
& -1 & & \\
& & 1 &\\
& & & 1 
\end{array} \right);
\end{equation*}
\end{minipage}
\hfill 
\begin{minipage}[c]{0.49\linewidth}
\centering
\begin{equation*}
h_2=\omega_1\alpha_2= \left( \begin{array}{cccc}
& & & 1   \\
& & -1 & \\
& -1 & &\\
1 & & & 
\end{array} \right);
\end{equation*}
\end{minipage}

\begin{minipage}[c]{0.19\linewidth}
\centering
\begin{equation*}
h_3=\omega_1\alpha_3=\left( \begin{array}{cccc}
& & -1 &   \\
&  & & -1 \\
-1 & & &\\
& -1 & &  
\end{array} \right);
\end{equation*}
\end{minipage}
\hfill
\begin{minipage}[c]{0.46\linewidth}
\centering
\begin{equation*}
\,\,\,h_4=\omega_2\alpha_1= \left( \begin{array}{cccc}
& & & -1  \\
& & -1 & \\
& -1 & &\\
-1 & & &  
\end{array} \right);
\end{equation*}
\end{minipage}

\begin{minipage}[c]{0.49\linewidth}
\centering
\begin{equation*}
h_5=\omega_2\alpha_2= \left( \begin{array}{cccc}
-1 & & &   \\
& 1 & & \\
& & -1 &\\
& & & 1 
\end{array} \right);
\end{equation*}
\end{minipage}
\hfill
\begin{minipage}[c]{0.49\linewidth}
\centering
\begin{equation*}
h_6=\omega_2\alpha_3= \left( \begin{array}{cccc}
& 1& &   \\
1 & & & \\
& & & -1\\
& & -1 &  
\end{array} \right);
\end{equation*}
\end{minipage}

\begin{minipage}[c]{0.49\linewidth}
\centering
\begin{equation*}
h_7=\omega_3\alpha_1= \left( \begin{array}{cccc}
& & 1 &   \\
&  & & -1 \\
1 & & &\\
& -1 & &  
\end{array} \right);
\end{equation*}
\end{minipage}
\hfill
\begin{minipage}[c]{0.49\linewidth}
\centering
\begin{equation*}
h_8=\omega_3\alpha_2= \left( \begin{array}{cccc}
& -1 & &   \\
-1 & & & \\
& & & -1\\
& & -1 & 
\end{array} \right);
\end{equation*}
\end{minipage}

\begin{minipage}[c]{0.49\linewidth}
\centering
\begin{equation*}
h_9=\omega_3\alpha_3= \left( \begin{array}{cccc}
-1 & & &   \\
& 1 & & \\
& & 1 &\\
& & & -1 
\end{array} \right).
\end{equation*}
\end{minipage}

At the same time, according to Berger \cite{berger61} (see also \cite{st69}), at a point, there is a normal form for the Weyl tensor: a basis $\{e_i\}_{i=1}^4$ such that the bases (\ref{Hodgebase}) consist of eigenforms of $\WW^{\pm}$. Whence, X. Cao, Gursky and Tran in \cite[Proposition 4.3]{CGT} are able to compute $\widehat{\WW}$ with respect to (\ref{basis2tensordim4}), 
\begin{align} \label{Wmatrix}
\widehat{\WW} = \left (\begin{array}{lll}
\, \mathcal{D}_1 \, & \, 0 \, & \, 0 \, \\
\, 0 \, & \, \mathcal{D}_2 \, & \, 0  \, \\
\, 0 \, & \, 0 \, & \, \mathcal{D}_3 \,
\end{array}
\right),
\end{align}
here the $\mathcal{D}_i$'s are diagonal matrices given by

\begin{align*} \label{D1matrix}
\mathcal{D}_i =  \left (\begin{array}{lll}
\displaystyle -4( \lambda_i + \mu_1)  &  &  \\
& \displaystyle -4 ( \lambda_i + \mu_2 )    &  \\
&   &  \displaystyle -4 ( \lambda_i + \mu_3)
\end{array}
\right).
\end{align*}

Before proceeding, we list the curvature operator of the {\it second kind} $\widehat{R}$ for some basic examples:

\begin{enumerate}
\item[{\bf 1.}] Let $(\Bbb{S}^4(\sqrt{6}),\,g_{0})$ be the $4$-dimensional round sphere of radius $\sqrt{6}$ and scalar curvature $R=2.$ In this case, one has $$\widehat{R}=\sqrt{6}\Bbb{I},$$ where $\Bbb{I}$ is the identity matrix.
\item[\textbf{2.}] Let $(\Bbb{CP}^2,\,g_{_{FS}})$  be the complex projective space of complex dimension $2$ with the Fubini-Study metric and scalar curvature $R=8.$ Then, up to the ordering of eigenvalues, one has

\begin{align*}
	\widehat{R} = 16 \left (\begin{array}{lll}
		-\frac{1}{2} \mathbb{I} \, & \, 0 \, & \, 0 \, \\
		\ \ \ 0 \, & \, \mathbb{I} \, & \, 0 \, \\
		\ \ \ 0 \, & \, 0 \, & \, \mathbb{I} \,
	\end{array}
	\right).
	\end{align*}
	
	\item[\textbf{3.}] Let $\Big(\Bbb{S}^2 (\sqrt{2})\times \Bbb{R}^2,\,g,\,f\Big)$ be the GSRS with the product metric $g$ and scalar curvature $R=1.$ In this case, up to the ordering of eigenvalues, one has
	\begin{align*}
	\widehat{R} = \sqrt{2} \left (\begin{array}{lllllllll}
		-\frac{1}{2} & \ & \ & \ & \ & \ & \ & \ & \ \\
		\ & 1 & \ & \ & \ & \ & \ & \ & \ \\
		\ & \ & 1 & \ & \ & \ & \ & \ & \ \\
		\ & \ & \ & 0 & \ & \ & \ & \ & \ \\
		\ & \ & \ & \ & 0 & \ & \ & \ & \ \\
		\ & \ & \ & \ & \ & 0 & \ & \ & \ \\
		\ & \ & \ & \ & \ & \ & 0 & \ & \ \\
		\ & \ & \ & \ & \ & \ & \ & 0 & \ \\
		\ & \ & \ & \ & \ & \ & \ & \ & 0 \\
	\end{array}
	\right).
\end{align*}
	
	\end{enumerate}

\subsection{Four-Dimensional Shrinking Ricci Solitons}
In this subsection, we are going to collect some well-known identities for four-dimensional GSRS satisfying (\ref{maineq}). First, let us recall the following lemma. 

\begin{lemma}[\cite{Hamilton2}]
\label{lem1}
Let $\big(M^4,\,g,\,f\big)$ be a four-dimensional gradient shrinking Ricci soliton satisfying (\ref{maineq}). Then we have:
\begin{enumerate}
\item $R+\Delta f=2;$
\item $\frac{1}{2}\nabla R=Ric(\nabla f);$
\item $\Delta_{f} R= R-2|Ric|^{2};$
\item $R+|\nabla f|^{2}= f$ (after normalizing);
\item $\Delta_{f} R_{ij}=R_{ij}-2R_{ikjl}R_{kl}.$
\end{enumerate} Here, $\Delta_f\cdot:=\Delta\cdot-\nabla_{\nabla f}\cdot$ denotes the drifted Laplacian. 
\end{lemma}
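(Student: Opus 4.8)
The plan is to derive all five identities directly from the soliton equation (\ref{maineq}) by purely local tensor calculus — tracing, differentiating once or twice, and commuting covariant derivatives — using nothing beyond the first and second Bianchi identities and the Ricci commutation formula. I would treat the items in the order (1), (2), (4), (5), (3), since (2) feeds into (4) and (5), and (3) is just a trace of (5). Item (1) is immediate: contracting (\ref{maineq}) with $g^{ij}$ and using $\mathrm{tr}_g\, g = 4$ gives $R+\Delta f = 2$. The backbone for the remaining items is the auxiliary identity obtained by differentiating (\ref{maineq}): from $\nabla_k R_{ij} = -\nabla_k\nabla_i\nabla_j f$, antisymmetrizing in $k$ and $i$ and applying the Ricci commutation formula to the one-form $\nabla f$ produces
\[ \nabla_i R_{jk} - \nabla_j R_{ik} = R_{ijkl}\nabla^l f \]
(the precise placement of signs depending on the Riemann-tensor convention). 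Contracting this over $i$ and $k$ and invoking the contracted second Bianchi identity $\nabla^k R_{jk} = \tfrac12\nabla_j R$ yields item (2), $\tfrac12\nabla R = \Rc(\nabla f)$. Item (4) then follows by differentiating $R + |\nabla f|^2 - f$: the soliton equation rewrites $\tfrac12\nabla_i|\nabla f|^2 = \nabla_i\nabla_j f\,\nabla^j f$ as $\tfrac12\nabla_i f - R_{ij}\nabla^j f$, item (2) turns $2R_{ij}\nabla^j f$ into $\nabla_i R$, and all terms cancel; hence $R + |\nabla f|^2 - f$ is locally constant, and we normalize $f$ so the constant is $0$.

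For item (5) I would take the divergence of the auxiliary identity above. On the left one obtains $\Delta R_{jk}$ together with $\nabla^i\nabla_j R_{ik}$; the latter is handled by commuting $\nabla^i$ and $\nabla_j$ (which contributes curvature contractions) and then applying the contracted second Bianchi identity to $\nabla^i R_{ik}$. On the right, $\nabla^i\big(R_{ijkl}\nabla^l f\big)$ splits into $\big(\nabla^i R_{ijkl}\big)\nabla^l f$, again reorganized via the contracted second Bianchi identity, and $R_{ijkl}\nabla^i\nabla^l f$, where (\ref{maineq}) converts $\nabla^i\nabla^l f$ into $\tfrac12 g^{il} - R^{il}$. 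Collecting everything — the terms that are linear in $\nabla R$ and in $\nabla f$ reassemble, via item (2), into the drift term $\nabla_{\nabla f}R_{jk}$ — yields the asserted evolution equation, which in the notation of the statement reads $\Delta_f R_{ij} = R_{ij} - 2 R_{ikjl}R_{kl}$. (Equivalently, this is the familiar fact that the Ricci tensor of a soliton solves the linearized Ricci flow, i.e., evolves by the Lichnerowicz Laplacian.) Finally, item (3) is the trace of item (5): since $g$ is parallel, $\mathrm{tr}_g$ commutes with $\Delta_f$, so $\Delta_f R = g^{ij}R_{ij} - 2 g^{ij}R_{ikjl}R_{kl} = R - 2|\Rc|^2$, using $g^{ij}R_{ikjl} = R_{kl}$.

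The single genuinely delicate point is the bookkeeping in item (5): one must carry the chosen Riemann-tensor sign convention consistently through several applications of the commutation formula and recognize which of the resulting curvature contractions collapse — via the Ricci identity together with the first and second Bianchi identities — into the single term $R_{ikjl}R_{kl}$ and the drift term $\nabla_{\nabla f}R_{ij}$. Every other step is a routine manipulation of (\ref{maineq}) and its first derivatives.
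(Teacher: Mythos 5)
Your derivation is correct and is exactly the classical computation: the paper gives no proof of this lemma, citing it to Hamilton, and your route (trace for (1), commute derivatives of the differentiated soliton equation plus contracted second Bianchi for (2), cancellation of gradients for (4), divergence of the same auxiliary identity for (5), and trace for (3)) is the standard one found in that reference. No issues.
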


Chen showed in \cite{Chen} that any complete ancient solution to the Ricci flow has nonnegative scalar curvature, it follows that $R\geq 0$ for any complete GSRS. Moreover, $R$ is strictly positive  unless $(M^4,\,g,\,f)$ is the Gaussian shrinking soliton (see \cite{PRS}). 

Regarding the potential function $f$, H.-D. Cao and Zhou \cite{CZ} proved that 
\begin{equation}
\label{eqfbeh}
\frac{1}{4}\Big(r(x)-c\Big)^{2}\leq f(x)\leq \frac{1}{4}\Big(r(x)+c\Big)^{2},
\end{equation} for all $r(x)\geq r_{0},$ where $r=r(x)$ is the distance function to a fixed point in $M.$ Additionally, they showed that every complete noncompact GSRS has at most Euclidean volume growth (see \cite[Theorem 1.2]{CZ}). These asymptotic estimates are optimal in the sense that they are achieved by the Gaussian shrinking soliton.

Next, we recall a Weitzenb\"ock type formula established by the first and third authors \cite{CH} that will play a crucial role in our proofs.

\begin{proposition}[\cite{CH}]
\label{propB}
Let $(M^4,\,g,\,f)$ be a four-dimensional gradient shrinking Ricci soliton satisfying (\ref{maineq}). Then we have:
$$\Delta_{f} |\WW^{\pm}|^{2}=2|\nabla \WW^{\pm}|^{2}+2|\WW^{\pm}|^{2}-36\, \det \WW^{\pm}-\langle (\mathring{Ric}\odot \mathring{Ric})^{\pm},\WW^{\pm}\rangle,$$
where $\odot$ stands for the Kulkarni-Nomizu product.
\end{proposition}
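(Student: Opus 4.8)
The plan is to run the Bochner technique for $\WW^{+}$, viewed as a section of the parallel sub-bundle $S^{2}_{0}(\Lambda^{+})$ (it is cut out by the Hodge star, so the projection onto the self-dual part commutes with $\nabla$, $\Delta$ and $\Delta_{f}$). From the elementary identity $\Delta_{f}|\WW^{+}|^{2}=2|\nabla\WW^{+}|^{2}+2\langle\Delta_{f}\WW^{+},\WW^{+}\rangle$, the whole statement reduces to a \emph{pointwise} evolution formula for $\WW^{+}$ on a GSRS, namely to
\[
\langle\Delta_{f}\WW^{+},\WW^{+}\rangle=|\WW^{+}|^{2}-18\det\WW^{+}-\tfrac{1}{2}\langle(\mathring{Ric}\odot \mathring{Ric})^{+},\WW^{+}\rangle.
\]

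To obtain the evolution of $\WW^{+}$, I would first establish the drifted evolution of the full curvature tensor on a GSRS. Differentiating (\ref{maineq}) and combining with the contracted second Bianchi identity (and the standard soliton commutation identity $\nabla_{i}R_{jk}-\nabla_{j}R_{ik}=-R_{ijkl}\nabla^{l}f$, a consequence of (\ref{maineq}) and Lemma \ref{lem1}) yields the soliton analogue of Hamilton's curvature evolution,
\begin{align*}
\Delta_{f}R_{ijkl}&=R_{ijkl}+2\bigl(B_{ijkl}-B_{ijlk}+B_{ikjl}-B_{iljk}\bigr)\\
&\quad-\bigl(R_{ip}R_{pjkl}+R_{jp}R_{ipkl}+R_{kp}R_{ijpl}+R_{lp}R_{ijkp}\bigr),
\end{align*}
with $B_{ijkl}=R_{ipjq}R_{kplq}$; the coefficient $1$ in front of $R_{ijkl}$ is exactly what the normalization $\tfrac12 g$ in (\ref{maineq}) produces, the same phenomenon as in Lemma \ref{lem1}(5). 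Pairing with $\WW^{+}$ (which only sees the $\Lambda^{+}\otimes\Lambda^{+}$ block), the term $R_{ijkl}$ contributes $|\WW^{+}|^{2}$, since the scalar part of (\ref{weyl}) is a multiple of the identity on $\Lambda^{+}$ and hence orthogonal to the trace-free $\WW^{+}$, while the traceless-Ricci part of (\ref{weyl}) lands in the off-diagonal $\Lambda^{+}\!\leftrightarrow\!\Lambda^{-}$ block; this accounts for the clean $2|\WW^{+}|^{2}$.

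It then remains to substitute the decomposition (\ref{weyl}) into every factor of $R_{ijkl}$ in the quadratic terms, sort the resulting monomials by how many traceless-Ricci and scalar-curvature factors they carry, and project to the $++$ block. The purely Weyl contributions of the $B$-terms reassemble --- using the normal form (\ref{eigenW}) and the parallel invariance of $\Lambda^{+}$ --- into a symmetric cubic in the eigenvalues $\lambda_{1},\lambda_{2},\lambda_{3}$; since $\lambda_{1}+\lambda_{2}+\lambda_{3}=0$ this cubic is a multiple of $\det\WW^{+}=\lambda_{1}\lambda_{2}\lambda_{3}$, and keeping track of the $\tfrac14$ normalization of the $\Lambda^{2}$ inner product in (\ref{Hodgebase}) pins the constant to $-36$ (consistently with the equality case $\lambda_{3}=\lambda_{2}=-\tfrac12\lambda_{1}$ in (\ref{eqdet})). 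Monomials carrying exactly one traceless-Ricci factor vanish after projection to $++$: since $\mathring{Ric}\odot g$ is supported on the off-diagonal $\Lambda^{+}\!\leftrightarrow\!\Lambda^{-}$ blocks, an odd number of such factors cannot send $\Lambda^{+}$ back to itself, which is exactly why Proposition \ref{propB} has no term linear in $\mathring{Ric}$. Monomials with two traceless-Ricci factors organize, via the $S^{2}_{0}\!\leftrightarrow\!\Lambda^{+}$ dictionary (\ref{basis2tensordim4}) and Lemma \ref{knproduct}, into $-\langle(\mathring{Ric}\odot \mathring{Ric})^{+},\WW^{+}\rangle$; and the terms containing at least one factor of $R$ (from the scalar part of (\ref{weyl}) and from the Ricci-contraction terms) must cancel among themselves. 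An essentially equivalent route is to start from the general four-manifold Weitzenböck identity for the self-dual Weyl curvature --- expressing $\delta\WW^{+}$ through the Cotton tensor by the second Bianchi identity, applying the bundle Weitzenböck formula for $\Lambda^{+}$ (whose curvature operator is $\WW^{+}+\tfrac{R}{12}\,\mathrm{Id}$), and then using Lemma \ref{lem1} together with $\Rc+Hess\,f=\tfrac12 g$ to turn the Cotton term into $\Delta_{f}$ plus algebraic terms and to absorb the scalar-curvature contribution.

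The main obstacle is precisely this bookkeeping: checking that, after substituting (\ref{weyl}) everywhere and projecting to $\Lambda^{+}$, \emph{every} contribution other than $2|\WW^{+}|^{2}$, $-36\det\WW^{+}$, and $-\langle(\mathring{Ric}\odot \mathring{Ric})^{+},\WW^{+}\rangle$ cancels --- in particular all $R|\WW^{+}|^{2}$-type terms and all first-order terms in $\nabla f$ beyond those that complete $\Delta$ into $\Delta_{f}$. The normal forms (\ref{eigenW}), (\ref{basis2tensordim4}) and Lemma \ref{knproduct} reduce this to a finite algebraic computation, in which the self-dual/anti-self-dual symmetry and the trace condition $\lambda_{1}+\lambda_{2}+\lambda_{3}=0$ are what force the cancellations. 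The anti-self-dual case is identical after reversing the orientation.
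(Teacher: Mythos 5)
The paper does not prove Proposition \ref{propB}; it is imported verbatim from the reference \cite{CH}, so there is no in-paper argument to compare against. Your outline is essentially the derivation given there: reduce to $\langle\Delta_f\WW^+,\WW^+\rangle$ using that the projection onto the $\WW^+$-component is parallel, invoke the soliton analogue of Hamilton's evolution equation $\Delta_f R_{ijkl}=R_{ijkl}+2(B_{ijkl}-B_{ijlk}+B_{ikjl}-B_{iljk})-(\Rc\ast\Rm)_{ijkl}$ (with the coefficient $1$ correctly traced to the normalization $\tfrac12 g$, as in Lemma \ref{lem1}(5)), and then project the quadratic terms to the $\Lambda^+\otimes\Lambda^+$ block using the four-dimensional decomposition. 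All the structural claims you make are correct: the linear term contributes $|\WW^+|^2$ by orthogonality of the decomposition (\ref{weyl}), terms with an odd number of $\mathring{Ric}$-factors die because $\mathring{Ric}\odot g$ is off-diagonal with respect to $\Lambda^+\oplus\Lambda^-$, the pure Weyl cubic is a multiple of $\det\WW^+$ by $\lambda_1+\lambda_2+\lambda_3=0$, and the double-$\mathring{Ric}$ terms assemble into $-\langle(\mathring{Ric}\odot\mathring{Ric})^+,\WW^+\rangle$ via Lemma \ref{knproduct}. The one caveat is that the decisive facts --- the coefficient $-36$, the complete cancellation of all terms involving $R$, and the absence of any $\WW^-$-contribution in the $++$ block of the quadratic expression --- are asserted rather than computed; they are true (and can be sanity-checked on $\Bbb{S}^2\times\Bbb{R}^2$, where $2|\WW^+|^2-36\det\WW^+-\langle(\mathring{Ric}\odot\mathring{Ric})^+,\WW^+\rangle=0$ matches $\nabla\WW^+\equiv 0$), but carrying out that finite computation is the actual content of the proof.
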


\section{Key Estimates}
\label{keyestimates}
In this section, we will establish  some key lemmas that will be used in  our proofs.

\begin{lemma}
	\label{kahlerform}
		Let $(M^4,\ g)$ be a four-dimensional Riemannian manifold. If $\nabla \WW^+= \nu \otimes \WW^+$ for some one-form $\nu$ and $\lambda_2=\lambda_3$ at each point, then $\omega_1$ is a locally K\"ahler form.
\end{lemma}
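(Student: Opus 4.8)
The plan is to use the structure equations (\ref{nablaW}) for $\nabla \WW^+$ together with the hypotheses $\lambda_2 = \lambda_3$ and $\nabla \WW^+ = \nu \otimes \WW^+$ to show that the one-form $a_1$ appearing in $\nabla \omega_1 = a_3 \otimes \omega_2 - a_2 \otimes \omega_3$ (with the convention from (\ref{nablaW})) plays no role, and that the remaining terms force $\nabla \omega_1 = 0$, or at least $\nabla \omega_1 = \eta \otimes (\text{something})$ that vanishes; indeed a parallel, or even just closed and coclosed, self-dual two-form of constant length $\sqrt 2$ is exactly a K\"ahler form by Lemma \ref{lambda2pm}(2). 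Concretely, first I would write $\lambda_1 = -2\lambda_3$ (using $\lambda_1 + \lambda_2 + \lambda_3 = 0$ and $\lambda_2 = \lambda_3$), so that $\lambda_1 - \lambda_2 = \lambda_1 - \lambda_3 = -3\lambda_3 =: 3\mu$, and substitute into (\ref{nablaW}). Comparing the resulting expression for $2\nabla\WW^+$ with $2\nu \otimes \WW^+ = \nu \otimes \sum_i \lambda_i\, \omega_i \otimes \omega_i$ coefficient-by-coefficient in the basis $\{\omega_i \otimes \omega_i,\ \omega_i \otimes \omega_j\}$ should pin down $d\lambda_i$ in terms of $\nu$ and $\lambda_i$, and — more importantly — force the off-diagonal connection terms $(\lambda_i - \lambda_k) a_j$ either to vanish or to be expressible purely through $\nu$.

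The key computational step is the off-diagonal bookkeeping. In (\ref{nablaW}) the coefficient of $\omega_2 \otimes \omega_1$ (say) involves $a_3$ with coefficient $\lambda_1 - \lambda_3$, while the coefficient of $\omega_1\otimes\omega_1$ on the right side $\nu\otimes\WW^+$ is $\nu\lambda_1$; since $\WW^+$ has a repeated eigenvalue, several of these off-diagonal slots on the $\nu\otimes\WW^+$ side are \emph{zero}, so the matching equations read $(\lambda_i - \lambda_j)\, a_k = 0$ or $(\lambda_i-\lambda_j)a_k = (\text{diagonal combination of }\nu)$. Because $\lambda_1 \neq \lambda_2 = \lambda_3$ (generically $\lambda_3 \neq 0$; the locus $\lambda_3 = 0$ means $\WW^+ \equiv 0$ and then $\omega_1$ can be chosen parallel by Lemma \ref{lambda2pm}, or handled separately), the forms $a_2, a_3$ — precisely the ones entering $\nabla\omega_1$ — are forced to be proportional to, or determined by, $\nu$ in such a way that $\nabla\omega_1 = \psi\otimes\omega_1$ type terms appear only through the \emph{antisymmetric} part, which must vanish since $\nabla\omega_1$ lands in $T^*M \otimes \Lambda^+$ and pairing with $\omega_1$ gives $\nabla|\omega_1|^2/2 = 0$. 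Putting this together yields $\nabla\omega_1 = 0$ pointwise, hence $d\omega_1 = 0$, and by Lemma \ref{lambda2pm}(2) $\omega_1$ is (a multiple of) an almost complex structure compatible with $g$ that is parallel, i.e. a K\"ahler form, on the neighborhood where the frame (\ref{eigenW}) is defined — giving the claimed \emph{local} statement.

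The main obstacle I anticipate is the degenerate stratum where $\lambda_3 = 0$ (equivalently $\lambda_1 = \lambda_2 = \lambda_3 = 0$, so $\WW^+$ vanishes identically there, since $\lambda_2 = \lambda_3$ and the $\lambda_i$ sum to zero): on such a set $\WW^+ = 0$, the eigenframe $\{\omega_i\}$ is not canonically defined, and the identity $\nabla\WW^+ = \nu\otimes\WW^+$ gives no information. One must argue that this set is either empty, or that one can still select a smooth compatible almost-complex structure $\omega_1$ extending the one defined on the open set $\{\WW^+ \neq 0\}$ and show it remains parallel by continuity of $\nabla\omega_1$; alternatively, restrict attention to the open dense set where $\WW^+ \neq 0$, prove K\"ahlerity there, and note that the K\"ahler condition, being closed, propagates. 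A secondary subtlety is the sign/index conventions in (\ref{nablaW}) — one must be careful that "$\omega_1$" is the eigenform for the eigenvalue $\lambda_1$ that is the \emph{simple} one, and track which $a_j$'s actually appear in $\nabla\omega_1$ versus $\nabla\omega_2, \nabla\omega_3$; choosing the labeling so that the repeated eigenvalue is $\lambda_2 = \lambda_3$ makes $a_1$ (the form rotating $\omega_2$ into $\omega_3$) the only potentially nonzero connection form, and crucially $a_1$ does \emph{not} enter $\nabla\omega_1$, which is what makes the lemma work.
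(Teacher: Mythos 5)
Your proposal is correct and follows essentially the same route as the paper: substitute $\lambda_1=-2\lambda_2=-2\lambda_3$ into (\ref{nablaW}), match against $2\nu\otimes\WW^+=\sum_i\lambda_i\,\nu\otimes\omega_i\otimes\omega_i$, and read off from the vanishing of the off-diagonal slots that $a_2=a_3\equiv 0$, so $\nabla\omega_1=0$ and $\omega_1$ is a local K\"ahler form. Your extra care about the stratum $\lambda_1=0$ (where $\WW^+=0$ and the eigenframe is not canonical) is a legitimate point the paper's proof passes over silently, though it is harmless in the applications, where that case is treated separately.
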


\begin{proof}
	Since  $\lambda_2=\lambda_3$ at each point, from (\ref{eigenvalues}), it is clear  that
	\[\lambda_1=-2\lambda_2=-2\lambda_3\]
	is a non-negative function. Its eigenspace $\omega_1$ is therefore locally defined (for example, see \cite{derd1}). Hence, it follows from (\ref{nablaW}) that 
	\begin{eqnarray*}
		2\nabla \WW^+ &=& (d\lambda_1\otimes \omega_1 +\frac{3}{2}\lambda_1 a_2\otimes \omega_3-\frac{3}{2}\lambda_1 a_3\otimes \omega_2)\otimes \omega_1\\
		&& + (d\lambda_2\otimes \omega_2 -\frac{3}{2}\lambda_1 a_3\otimes \omega_1)\otimes \omega_2\\
		&& + (d\lambda_3\otimes \omega_3 +\frac{3}{2}\lambda_1 a_2\otimes \omega_1)\otimes \omega_3.
	\end{eqnarray*} 
	At the same time, since $\nabla \WW^+= \nu \otimes \WW^+,$ we have
	\begin{align*}
		2\nabla \WW^+ &= \sum_{i=1}^3\lambda_i\nu\otimes \omega_i\otimes \omega_i.  
	\end{align*} The equations above then imply that 
	\begin{align*}
		\lambda_1\nu &= d\lambda_1,\\
		a_2 &= a_3\equiv 0. 
	\end{align*} Consequently, $\nabla \omega_1\equiv 0$ and, since $\omega_1\in \Lambda^+,$ it is a locally K\"ahler form. 
\end{proof}

Next, we obtain a sharp estimate for $\langle (\mathring{Ric}\odot \mathring{Ric})^{+},\WW^{+}\rangle$.

\begin{lemma}
	\label{lemK}
	Let $(M^{4},\,g)$ be an oriented four-dimensional Riemannian manifold. Then we have:     	
	\begin{equation}
		\label{EqHu}
		\langle (\mathring{Ric}\odot \mathring{Ric})^{+},\WW^{+}\rangle \leq \frac{\sqrt{6}}{3}|\mathring{Ric}|^{2}|\WW^{+}|.
	\end{equation} Moreover, equality holds if and only if $\WW^+$ has eigenvalues 
	\[0\leq \lambda_1=-2 \lambda_2=-2\lambda_3\]
	and 
	\[\mathring{\Rc}=a_1 h_1+ a_2 h_2+ a_3 h_3. \]
\end{lemma}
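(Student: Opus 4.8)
The plan is to diagonalize $\WW^{+}$ at a point and reduce \eqref{EqHu} to two elementary inequalities among its three eigenvalues. Fix $p\in M^{4}$ and, using Berger's normal form, choose an oriented orthonormal frame $\{e_i\}$ of $T_pM$ so that the basis $\mathbb{B}^{+}$ of \eqref{Hodgebase} consists of eigenforms of $\WW^{+}$: thus $\WW^{+}\omega_i=\lambda_i\omega_i$ with $\lambda_1\ge\lambda_2\ge\lambda_3$, $\lambda_1+\lambda_2+\lambda_3=0$, and $|\WW^{+}|^{2}=\lambda_1^{2}+\lambda_2^{2}+\lambda_3^{2}$. By Lemma \ref{lambda2pm} the products $\{\omega_i\alpha_j\}$ of \eqref{basis2tensordim4} form an orthogonal basis of $S^{2}_{0}(T_pM)$, splitting it into the three mutually orthogonal blocks $E_i:=\mathrm{span}\{\omega_i\alpha_1,\omega_i\alpha_2,\omega_i\alpha_3\}$. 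The first real step is the identity
\[
\widehat{\WW^{+}}A=\tfrac12\sum_{m=1}^{3}\lambda_m\,\omega_m A\,\omega_m\qquad(A\in S^{2}(T_pM)),
\]
obtained by writing $\WW^{+}=\tfrac12\sum_m\lambda_m\,\omega_m\otimes\omega_m$ as a $(4,0)$-tensor and unwinding the definition of $\widehat{\,\cdot\,}$ (using $\omega_m^{T}=-\omega_m$). Combining it with the quaternionic relations of Lemma \ref{lambda2pm} ($\omega_m\omega_i\omega_m=-\omega_i$ when $m=i$, $\omega_m\omega_i\omega_m=\omega_i$ when $m\ne i$, and $\alpha_j$ commutes with every $\omega_m$) gives $\widehat{\WW^{+}}(\omega_i\alpha_j)=\tfrac12\big(-\lambda_i+\sum_{m\ne i}\lambda_m\big)\omega_i\alpha_j=-\lambda_i\,\omega_i\alpha_j$. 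Hence $\widehat{\WW^{+}}$ preserves each $E_i$ and acts on it as $-\lambda_i\,\mathrm{Id}$; this is the self-dual specialization of the Cao--Gursky--Tran computation behind \eqref{Wmatrix}.

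With this in hand, I would assemble the estimate. Since $\WW^{+}$ is self-dual, $\langle(\mathring{Ric}\odot\mathring{Ric})^{+},\WW^{+}\rangle=\langle\mathring{Ric}\odot\mathring{Ric},\WW^{+}\rangle$, so Lemma \ref{knproduct} applied with $T=\WW^{+}$ and $A=B=\mathring{Ric}$ yields
\[
\langle(\mathring{Ric}\odot\mathring{Ric})^{+},\WW^{+}\rangle=-\big(\widehat{\WW^{+}}\mathring{Ric},\mathring{Ric}\big)=\sum_{i=1}^{3}\lambda_i\,b_i,\qquad b_i:=|P_i\mathring{Ric}|^{2}\ge 0,
\]
where $P_i$ denotes the orthogonal projection of $S^{2}_{0}$ onto $E_i$ and $\sum_i b_i=|\mathring{Ric}|^{2}$. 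Because $\lambda_1\ge\lambda_2\ge\lambda_3$ and $b_i\ge0$, one has $\sum_i\lambda_i b_i=\lambda_1|\mathring{Ric}|^{2}-(\lambda_1-\lambda_2)b_2-(\lambda_1-\lambda_3)b_3\le\lambda_1|\mathring{Ric}|^{2}$; and since $\lambda_1=-(\lambda_2+\lambda_3)$ and $2\lambda_2\lambda_3\le\lambda_2^{2}+\lambda_3^{2}$, we get $\lambda_1^{2}=(\lambda_2+\lambda_3)^{2}\le 2(\lambda_2^{2}+\lambda_3^{2})$, hence $3\lambda_1^{2}\le 2|\WW^{+}|^{2}$, i.e. $\lambda_1\le\tfrac{\sqrt6}{3}|\WW^{+}|$. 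Chaining these two bounds proves \eqref{EqHu}.

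For the equality statement I would run both inequalities backwards. If $\WW^{+}=0$ both sides of \eqref{EqHu} vanish; otherwise $\lambda_1>0$. Equality in $\lambda_1^{2}\le 2(\lambda_2^{2}+\lambda_3^{2})$ forces $\lambda_2=\lambda_3$, hence $\lambda_1=-2\lambda_2=-2\lambda_3>0$ and in particular $\lambda_1>\lambda_2=\lambda_3$; then equality in the ordering bound forces $b_2=b_3=0$, i.e. $\mathring{Ric}=P_1\mathring{Ric}\in E_1=\mathrm{span}\{h_1,h_2,h_3\}$, which is the stated form $\mathring{Ric}=a_1h_1+a_2h_2+a_3h_3$; the converse is immediate, since $\lambda_1=-2\lambda_2=-2\lambda_3$ gives $\tfrac{\sqrt6}{3}|\WW^{+}|=\lambda_1$ and $\mathring{Ric}\in E_1$ gives $\sum_i\lambda_i b_i=\lambda_1|\mathring{Ric}|^{2}$. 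I expect the only genuine obstacle to lie in the first paragraph --- identifying the action of the curvature operator of the second kind $\widehat{\WW^{+}}$ on $S^{2}_{0}$, and in particular getting its normalization right so that it meshes with $|\mathring{Ric}|^{2}=\sum_i b_i$; once that eigenvalue picture is established, the rest is Cauchy--Schwarz-type bookkeeping with three real numbers.
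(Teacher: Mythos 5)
Your proof is correct and follows essentially the same route as the paper's: both pass to the curvature operator of the second kind via Lemma \ref{knproduct}, use the block-diagonal action of $\widehat{\WW^{+}}$ on the basis $\{\omega_i\alpha_j\}$ (which you rederive from the quaternionic relations rather than quoting \eqref{Wmatrix}), and then maximize $\sum_i\lambda_i b_i$ subject to $b_i\ge 0$, $\sum_i b_i=|\mathring{\Rc}|^2$. The only difference is presentational: the paper carries out that optimization geometrically over an equilateral triangle, while you use the two elementary bounds $\sum_i\lambda_i b_i\le\lambda_1|\mathring{\Rc}|^{2}$ and $\lambda_1\le\tfrac{\sqrt6}{3}|\WW^{+}|$, which also makes the equality analysis slightly cleaner.
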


\begin{proof}
	A priori, using the orthogonal basis (\ref{basis2tensordim4}), one has $\mathring{\Rc}=\sum_{i=1}^9 a_i h_i$. It follows from (\ref{Wmatrix}) that
	\begin{align*}
		\widehat{\WW^+}(\mathring{\Rc}, \mathring{\Rc}) &= a_i^2\widehat{\WW^+}(h_i, h_i)\\
		&= -4\sum_{i=1}^3\lambda_i\sum_{j=3i-2}^{3i}a_j^2.
	\end{align*} Denote $A_i^{2}=\sum_{j=3i-2}^{3i}a_j^2$ and hence, since $|h_i|=2$, one obtains that
	\[|\mathring{\Rc}|^2= 4\sum_{i}A_i^2.\]
	Without loss of generality, we may consider $|\mathring{\Rc}|^2= \rho\geq 0,$ $|\WW^+|=\sigma\geq 0$ and
	\begin{align*}
		\vec{\lambda} &=(\lambda_1, \lambda_2, \lambda_3),\\
		\vec{A} &=(A_1^2, A_2^2, A_3^2).
	\end{align*}
	Therefore, $\vec{\lambda}$ is on the circular intersection of the sphere $x^2+y^2+z^2=\sigma^2$ and region $x\geq y \geq z$ in the plane $x+y+z=0.$ Besides, $\vec{A}$ is on the triangular intersection of the plane $x+y+z=\rho/4$ and the quadrant $x, y, z\geq 0$. Consequently,
	\begin{equation}
	\label{2qw10}
		-\widehat{\WW^+}(\mathring{\Rc}, \mathring{\Rc}) =4\sum_{i=1}^3 \lambda_i A_i^2= 4\left\langle{\vec{\lambda}, \vec{A}}\right\rangle= 4\left\langle{\vec{\lambda}, \vec{\overline{A}}}\right\rangle.
	\end{equation}
	Here, $\vec{\overline{A}}$ is the projection of $\vec{A}$ on the plane $x+y+z=0,$ that is, 
\begin{equation}
\label{3qw10}
\vec{\overline{A}}=\frac{1}{3}\left(2A_{1}^{2}-A_{2}^{2}-A_{3}^{2},-A_{1}^{2}+2A_{2}^{2}-A_{3}^{2},-A_{1}^{2}-A_{2}^{2}+2A_{3}^{2}\right).
\end{equation}
Observe that if $\vec{\overline{A}}$ is $(0,0,0),$ then it suffices to use (\ref{2qw10}) and (\ref{3qw10}) in order to infer that the asserted inequality is trivially satisfied. Therefore, from the description of $\vec{A},$ one can deduce that $\vec{\overline{A}}$ is within an equilateral triangle centered at the origin and vertices $(\rho/6,-\rho/12,-\rho/12),$ $(-\rho/12,\rho/6,-\rho/12)$ and $(-\rho/12,-\rho/12,\rho/6).$ In particular, the maximum value of  $\langle{\vec{\lambda}, \vec{\overline{A}}}\rangle$ is attained if and only if $\vec{\overline{A}}$ coincides with a vertex of the triangle and $\vec{\lambda}$ is parallel to it. Thereby, $\vec{A}$ coincides with a vertex and $\vec{\lambda}$ is a positive multiple of the projection of $\vec{A}$ on the plane $x+y+z=0$. By a direct computation at three vertices, one sees that the maximum is achieved at  
	\[\vec{A}=(\rho/4, 0, 0).\]
	Then, $\vec{\lambda}$ is a positive multiple of the projection of $\vec{A}$ on the plane $x+y+z=0$ and hence, it is not difficult to check that $\vec{\lambda}$ must be a multiple of $(2, -1, -1).$ Of which, we obtain 
		\begin{equation}
		\label{eq45t}
		-\widehat{\WW^+}(\mathring{\Rc}, \mathring{Rc}) \leq \frac{2}{\sqrt{6}}|\WW^+||\mathring{\Rc}|^2.
	\end{equation} Furthermore, equality holds if and only if $\WW^+$ has eigenvalues 
	\[0\leq \lambda_1=-2 \lambda_2=-2\lambda_3\]
	and 
	\[\mathring{\Rc}=a_1 h_1+ a_2 h_2+ a_3 h_3. \] 	
	Finally, it suffices to use Lemma \ref{knproduct} and (\ref{eq45t}) to calculate that
	\[\left\langle{\WW^+, \mathring{\Rc}\odot \mathring{\Rc}}\right\rangle= -\widehat{\WW^+}(\mathring{\Rc}, \mathring{Rc})\leq \frac{2}{\sqrt{6}}|\WW^+||\mathring{\Rc}|^2,\] which gives the desired result.
\end{proof}

To conclude this section, we shall use Proposition \ref{propB} to establish a lemma that will be employed in the proof of Theorem \ref{thmB}. 

\begin{lemma}
	\label{lemB}
	Let $(M^4,\,g,\,f)$ be a four-dimensional gradient shrinking Ricci soliton. Then, for $\Psi=f-2\ln R,$ we have
	\begin{eqnarray}
		\Delta_{\Psi}\left(\frac{|W^+|^2}{R^2}\right)&=& 4\frac{|Ric|^2}{R^3}|W^+|^2+2\frac{|\nabla R|^2}{R^4}|W^+|^2+\frac{2}{R^2}|\nabla W^+|^2\nonumber\\&& -36\frac{\det W^+}{R^2}-\frac{1}{R^2}\langle (\mathring{Ric}\odot \mathring{Ric})^{+},W^{+}\rangle\\&&-\frac{2}{R^3}\langle \nabla |W^{+}|^{2},\nabla R\rangle.\nonumber
	\end{eqnarray}
	
\end{lemma}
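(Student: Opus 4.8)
The plan is to compute $\Delta_\Psi\left(\frac{|\WW^+|^2}{R^2}\right)$ directly by expanding $\Psi = f - 2\ln R$ and combining the known evolution identities for $|\WW^+|^2$ and $R$ under the drifted Laplacian. First I would write $u = |\WW^+|^2$ and $v = R^{-2}$, and use the product rule
\[
\Delta_f(uv) = v\,\Delta_f u + u\,\Delta_f v + 2\langle \nabla u, \nabla v\rangle.
\]
For the factor $v = R^{-2}$, a routine chain-rule computation gives $\nabla v = -2R^{-3}\nabla R$ and
\[
\Delta_f v = -2R^{-3}\Delta_f R + 6R^{-4}|\nabla R|^2.
\]
Then I would substitute $\Delta_f R = R - 2|\Rc|^2$ from Lemma \ref{lem1}(3) and $\Delta_f|\WW^+|^2 = 2|\nabla\WW^+|^2 + 2|\WW^+|^2 - 36\det\WW^+ - \langle(\mathring{Ric}\odot\mathring{Ric})^+,\WW^+\rangle$ from Proposition \ref{propB}.

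Next, the passage from $\Delta_f$ to $\Delta_\Psi$ requires the correction term coming from $\Psi - f = -2\ln R$: by definition $\Delta_\Psi\cdot = \Delta\cdot - \langle\nabla\Psi,\nabla\cdot\rangle = \Delta_f\cdot + 2\langle\nabla\ln R, \nabla\cdot\rangle = \Delta_f\cdot + \frac{2}{R}\langle\nabla R, \nabla\cdot\rangle$. Applying this to $\frac{|\WW^+|^2}{R^2}$ produces an extra term $\frac{2}{R}\left\langle\nabla R, \nabla\!\left(\frac{|\WW^+|^2}{R^2}\right)\right\rangle$, which expands via the quotient rule into $\frac{2}{R^3}\langle\nabla R,\nabla|\WW^+|^2\rangle - \frac{4|\WW^+|^2}{R^4}|\nabla R|^2$. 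The strategy is then pure bookkeeping: collect all contributions to each of the monomials $\frac{|\Rc|^2}{R^3}|\WW^+|^2$, $\frac{|\nabla R|^2}{R^4}|\WW^+|^2$, $\frac{1}{R^2}|\nabla\WW^+|^2$, $\frac{\det\WW^+}{R^2}$, $\frac{1}{R^2}\langle(\mathring{Ric}\odot\mathring{Ric})^+,\WW^+\rangle$, and $\frac{1}{R^3}\langle\nabla|\WW^+|^2,\nabla R\rangle$, and verify that the $|\WW^+|^2/R^2$ terms (coming from $2|\WW^+|^2$ in Proposition \ref{propB} and from the $R$ in $\Delta_f R$) cancel. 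Indeed, $v\cdot 2|\WW^+|^2 = \frac{2|\WW^+|^2}{R^2}$ while $u\cdot(-2R^{-3})\cdot R = -\frac{2|\WW^+|^2}{R^2}$, so they cancel exactly, which is the reassuring consistency check that the choice $\Psi = f - 2\ln R$ (rather than some other multiple of $\ln R$) is the right one.

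I do not anticipate a genuine obstacle here; this is a computational lemma and every ingredient is already available in the excerpt (Lemma \ref{lem1}, Proposition \ref{propB}, and elementary calculus identities for $\Delta$ and $\nabla$ of composite functions). The only point demanding care is sign- and coefficient-tracking across the three sources of $|\nabla R|^2/R^4$ terms — namely $u\cdot 6R^{-4}|\nabla R|^2$ from $\Delta_f v$, $2\langle\nabla u,\nabla v\rangle$ contributing nothing of this type, and $-\frac{4|\WW^+|^2}{R^4}|\nabla R|^2$ from the $\Delta_\Psi$ correction — which must sum to $+2\frac{|\nabla R|^2}{R^4}|\WW^+|^2$; likewise the two sources of $\frac{1}{R^3}\langle\nabla|\WW^+|^2,\nabla R\rangle$ (from $2\langle\nabla u,\nabla v\rangle = -\frac{4}{R^3}\langle\nabla|\WW^+|^2,\nabla R\rangle$ and from the $\Delta_\Psi$ correction $+\frac{2}{R^3}\langle\nabla|\WW^+|^2,\nabla R\rangle$) must combine to the stated $-\frac{2}{R^3}\langle\nabla|\WW^+|^2,\nabla R\rangle$. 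Assembling these yields exactly the claimed identity.
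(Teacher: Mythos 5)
Your proposal is correct and follows essentially the same route as the paper: product rule for the (drifted) Laplacian applied to $|\WW^+|^2\cdot R^{-2}$, substitution of Proposition \ref{propB} and Lemma \ref{lem1}(3), and then the correction $\Delta_\Psi = \Delta_f + \tfrac{2}{R}\langle\nabla R,\nabla\cdot\rangle$; your coefficient bookkeeping ($6-4=2$ and $-4+2=-2$) matches the paper's computation exactly. The only cosmetic difference is that you work with $\Delta_f$ throughout instead of first expanding the ordinary Laplacian and converting afterwards, which slightly shortens the argument without changing its substance.
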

\begin{proof}
	First, one observes that
	
	\begin{equation}
		\label{eq1a1}
		\Delta\left(\frac{|W^{+}|^2}{R^2}\right)=R^{-2}\Delta |W^{+}|^{2}+|W^{+}|^{2}\Delta \left(R^{-2}\right)+2\langle \nabla |W^+|^2 , \nabla \left( R^{-2}\right)\rangle.
	\end{equation} Moreover,  we have $\nabla (R^{-2})=-2R^{-3}\nabla R$ so that
		\begin{equation*}
		\Delta (R^{-2})=-2R^{-3}\Delta R+6R^{-4}|\nabla R|^{2}.
	\end{equation*} This substituted into (\ref{eq1a1}) gives
	
	\begin{eqnarray*}
		\Delta \left(\frac{|W^+|^2}{R^2}\right)&=& |W^+|^{2}\left(-2\frac{\Delta R}{R^3}+6\frac{|\nabla R|^{2}}{R^4}\right) +\frac{1}{R^{2}}\Delta |W^{+}|^2\nonumber\\&& -\frac{4}{R^3}\langle \nabla |W^{+}|^{2},\nabla R\rangle.
	\end{eqnarray*} Now, we may use Proposition \ref{propB} and Lemma \ref{lem1} to infer
		\begin{eqnarray*}
		\Delta \left(\frac{|W^+|^2}{R^2}\right)&=& |W^+|^{2}\left[-\frac{2}{R^3}\left(\langle \nabla R,\nabla f\rangle +R-2|Ric|^{2}\right)+6\frac{|\nabla R|^{2}}{R^4}\right]\nonumber\\&&+\frac{1}{R^{2}}\Big[\langle \nabla f,\nabla |W^{+}|^{2}\rangle +2|\nabla W^{+}|^{2}+2|W^{+}|^{2}\nonumber\\&&-36\,\det W^{+}-\langle (\mathring{Ric}\odot \mathring{Ric})^{+},W^{+}\rangle\Big]\nonumber\\&&-\frac{4}{R^3}\langle \nabla |W^{+}|^{2},\nabla R\rangle.\nonumber
	\end{eqnarray*} Consequently, 
		\begin{eqnarray}
		\label{eq1a2}
		\Delta \left(\frac{|W^+|^2}{R^2}\right)&=&-2\frac{|W^+|^2}{R^3}\langle \nabla R,\nabla f\rangle + 4\frac{|Ric|^2}{R^3}|W^{+}|^{2}+6\frac{|\nabla R|^2}{R^4}|W^+|^2\nonumber\\&&+ \frac{1}{R^2}\langle \nabla f,\nabla |W^+|^{2}\rangle +2\frac{|\nabla W^+|^2}{R^2}-36\,\frac{\det W^+}{R^2}\nonumber\\&&-\frac{1}{R^2}\langle (\mathring{Ric}\odot \mathring{Ric})^{+},W^{+}\rangle-\frac{4}{R^3}\langle \nabla |W^{+}|^{2},\nabla R\rangle.
	\end{eqnarray}

	On the other hand, observe that
	
	$$\left\langle \nabla \Big(\frac{|W^+|^2}{R^2}\Big),\nabla f\right\rangle=\frac{1}{R^2}\langle \nabla |W^{+}|^{2},\nabla f\rangle-2\frac{|W^{+}|^2}{R^3}\langle \nabla R,\nabla f\rangle.$$ Therefore,  (\ref{eq1a2}) becomes

	\begin{eqnarray}
		\label{eq1a3}
		\Delta_{f} \left(\frac{|W^+|^2}{R^2}\right)&=&4\frac{|Ric|^2}{R^3}|W^+|^2 + 6\frac{|\nabla R|^2}{R^4}|W^+|^{2}+2\frac{|\nabla W^+|^2}{R^2}\nonumber\\&&-36\frac{\det W^+}{R^2} -\frac{1}{R^2}\langle (\mathring{Ric}\odot \mathring{Ric})^{+},W^{+}\rangle\\&&-\frac{4}{R^3}\langle \nabla |W^{+}|^{2},\nabla R\rangle.\nonumber
	\end{eqnarray} Now, choosing $\varphi=2\ln R$ we have 
	
	$$\left\langle \nabla \varphi, \nabla \left(\frac{|W^+|^2}{R^2}\right)\right\rangle = \frac{2}{R^3}\langle \nabla R,\nabla |W^+|^2\rangle -\frac{4}{R^4}|W^+|^2 |\nabla R|^2.
	$$ Plugging this into (\ref{eq1a3}) yields

	\begin{eqnarray}
		\Delta_{\Psi}\left(\frac{|W^+|^2}{R^2}\right)&=& 4\frac{|Ric|^2}{R^3}|W^+|^2+2\frac{|\nabla R|^2}{R^4}|W^+|^2+\frac{2}{R^2}|\nabla W^+|^2\nonumber\\&& -36\frac{\det W^+}{R^2}-\frac{1}{R^2}\langle (\mathring{Ric}\odot \mathring{Ric})^{+},W^{+}\rangle\\&&-\frac{2}{R^3}\langle \nabla |W^{+}|^{2},\nabla R\rangle,\nonumber
	\end{eqnarray} where $\Psi=f-2\ln R.$ This finishes the proof of the lemma. 
	
\end{proof}

\section{Proof of the Main Results}
\label{rigidityR}

In this section, we will present the proofs of Theorem \ref{thmA}, Theorem \ref{thmB} and Corollary \ref{corC}. In the first part, we adapt the arguments from H.-D. Cao-Ribeiro-Zhou \cite{CRZ}.

\subsection{Proof of Theorem \ref{thmA}}
\begin{proof} By Proposition \ref{propB}, we have
\begin{eqnarray}
\label{eq12sa}
2|\WW^{+}|\Delta_{f}|\WW^{+}| &=& 2|\nabla \WW^{+}|^{2}-2|\nabla |\WW^{+}||^{2}+2|\WW^{\pm}|^{2} -36\, \det \WW^{+}\nonumber\\&& -\langle (\mathring{Ric}\odot \mathring{Ric})^{+},\WW^{+}\rangle,
\end{eqnarray} where we have used that $$\Delta_{f}|\WW^{+}|^{2}=2|\WW^{+}|\Delta_{f}|\WW^{+}|+2|\nabla |\WW^{+}||^{2}.$$ 

Using the Kato inequality (\ref{eqdet}) and Lemma \ref{lemK}, one sees that
\begin{eqnarray}
\label{pl1r}
|\WW^{+}|\Delta_{f}|\WW^{+}| &\geq &  |\WW^{+}|^{2} -\sqrt{6}|\WW^{+}|^{3} -\frac{1}{2}\langle (\mathring{Ric}\odot \mathring{Ric})^{+},\WW^{+}\rangle,\nonumber \\&\geq & |\WW^{+}|^{2} -\sqrt{6}|\WW^{+}|^{3} -\frac{\sqrt{6}}{6}|\mathring{Ric}|^2 |\WW^{+}|.
\end{eqnarray}
Hence, our assumption guarantees that $|\WW^+|\Delta_{f} |\WW^+|$ does not change sign. 

In order to proceed, we need to show that $|\WW^+|$ is $L^2_{f}$-integrable, i.e., $|\WW^+|\in L^2(e^{-f}dV_{g}).$ To do so, we first observe that the assumption also implies that 
\begin{eqnarray*}
\sqrt{6}|\WW^+|^2 &\leq &|\WW^+| -\frac{\sqrt{6}}{6}|\mathring{Ric}|^2\nonumber\\&\leq & \frac{\sqrt{6}}{2}|\WW^+|^2 +\frac{1}{2\sqrt{6}}.
\end{eqnarray*} 
Integrating over $M^4,$ we obtain

\begin{eqnarray*}
\int_{M}|\WW^+|^2 e^{-f} dV_{g}\leq \frac{1}{6}\int_{M}e^{-f} dV_{g}.
\end{eqnarray*} Thus, since $M^4$ has finite weighted volume, i.e., $\int_{M}e^{-f} dV_{g}<\infty$ (see, \cite[Corollary 1.1]{CZ}), one concludes that $|W^+|$ is $L^2_{f}$-integrable. 

Proceeding, we consider a cut-off function $\rho:M\to \Bbb{R}$ such that $\rho=1$ on a geodesic ball $B_{p}(r)$ centered at a fixed point $p\in M$ of radius $r,$ $\rho=0$ outside of $B_{p}(2r)$ and $|\nabla\rho|\leq \frac{c}{r},$ where $c$ is a constant. By our assumption and (\ref{pl1r}), we have
\begin{eqnarray*}
0 &\geq & -\int_{M}\rho^{2} |\WW^+|\Delta_{f}|\WW^{+}| e^{-f} dV_{g}\nonumber\\&=& \int_{M}\left\langle \nabla \left( \rho^2 |\WW^{+}|\right),\nabla |\WW^{+}| \right\rangle e^{-f}dV_{g}\nonumber\\
&=&\int_{M}\left|\rho \nabla |\WW^{+}|+|\WW^{+}|\nabla \rho\right|^{2}e^{-f}dV_{g}-\int_{M} |\WW^{+}|^{2}|\nabla \rho|^{2} e^{-f}dV_{g}. \nonumber
\end{eqnarray*} It follows that
\begin{eqnarray}
\label{okjh}
\int_{M}\left|\nabla \left(\rho |W^{+}|\right)\right|^{2} e^{-f}dV_{g}\leq \int_{M} |W^{+}|^{2} |\nabla \rho|^{2} e^{-f}dV_{g}.
\end{eqnarray} Hence, we deduce that
\begin{eqnarray}
\int_{B(r)}\left|\nabla |W^{+}|\right|^{2} e^{-f}dV_{g}&\leq & \int_{M}\left|\nabla \left(\rho |W^{+}|\right)\right|^{2} e^{-f}dV_{g}\nonumber\\ &\leq &  \int_{M} |W^{+}|^{2} |\nabla \rho|^{2} e^{-f}dV_{g} \nonumber\\ &\leq & \int_{M\backslash B(2r)}|\WW^{+}|^{2}|\nabla \rho |^{2} dV_{g} + \int_{B(2r)\backslash B(r)}|\WW^{+}|^{2}|\nabla \rho |^{2} dV_{g}\nonumber\\&& + \int_{B(r)}|\WW^{+}|^{2}|\nabla \rho |^{2} dV_{g}\nonumber\\
&\leq & \int_{B(2r)\backslash B(r)} |W^{+}|^{2}|\nabla \rho |^{2} e^{-f}dV_{g}\nonumber\\ 
&\leq & \frac{c^2}{r^2}\int_{M} |W^{+}|^{2} e^{-f}dV_{g}.
\end{eqnarray} Since $|W^+|$ is a $L^{2}_{f}$-integrable function on $M^4,$ we conclude that the right hand side tends to zero as $r\to \infty$ and hence, $|W^+|$ must be constant. 

Now, if $|W^+|=0,$ then, by Theorem 1.2 of \cite{CW}, one deduces that the soliton is either Einstein, 
the Gaussian soliton $\Bbb{R}^4,$ the round cylinder $\Bbb{S}^{3}\times\Bbb{R}$, or their quotients. In the case that it is a non-flat Einstein manifold, one invokes Myer's theorem and the Hitchin classification to conclude that it is isometric to either the round sphere $\Bbb{S}^4$ or the complex projective space $\Bbb{CP}^2$ (see \cite[Theorem 13.30]{Besse}).

On the other hand, if $|W^+|$ is a nonzero constant, then it suffices to use (\ref{eq12sa}), (\ref{eqdet}), Lemma \ref{lemK} and the assumption, in order to infer that $\nabla W^+\equiv 0$. Consequently, by Theorem 1.1 in \cite{Wu}, one concludes that $(M^4, g, f)$ is either a finite quotient of $\Bbb{S}^2\times \Bbb{R}^2,$ or compact Einstein with $\nabla W^+\equiv 0$ and $\WW^+$ has 2 distinct eigenvalues. This finishes the proof of Theorem \ref{thmA}. 

\end{proof}

\subsection{Proof of Theorem \ref{thmB}}

\begin{proof}
Initially, one verifies that
\begin{eqnarray*}
\Delta_{\Psi}\left(\frac{|W^+|^2}{R^2}\right)&=& 
2\frac{|W^+|}{R}\Delta_{\Psi}\left(\frac{|W^+|}{R}\right) + 2\frac{|W^+|^2}{R^4}|\nabla R|^2 +\frac{2}{R^2}|\nabla |W^+||^2 \nonumber\\&&-\frac{2}{R^3}\langle \nabla R,\nabla |W^+|^2\rangle.
\end{eqnarray*} Now, substituting this data into Lemma \ref{lemB}, one obtains that 
\begin{eqnarray*}
2\frac{|W^+|}{R}\Delta_{\Psi}\left(\frac{|W^+|}{R}\right)&=& 4\frac{|Ric|^2}{R^3}|W^+|^2+\frac{2}{R^2}\left(|\nabla W^+|^2 - |\nabla |W^+||^2\right)\nonumber\\&& -36\frac{\det W^+}{R^2}-\frac{1}{R^2}\langle (\mathring{Ric}\odot \mathring{Ric})^{+},W^{+}\rangle.\nonumber
\end{eqnarray*} 
Thereby, by the Kato inequality, (\ref{eqdet}) and Lemma \ref{lemK}, we compute
\begin{eqnarray}
\label{wlkj}
2\frac{|W^+|}{R}\Delta_{\Psi}\left(\frac{|W^+|}{R}\right)&\geq& 4\frac{|Ric|^2}{R^3}|W^+|^2 - \frac{2\sqrt{6}}{R^2}|W^+|^3\nonumber\\&& -\frac{1}{R^2}\langle (\mathring{Ric}\odot \mathring{Ric})^{+},W^{+}\rangle \nonumber\\
 & \geq & 4\frac{|\mathring{Ric}|^2}{R^3}|W^+|^2 +\frac{|W^+|^2}{R}- \frac{2\sqrt{6}}{R^2}|W^+|^3\nonumber\\ &&
-\frac{1}{R^2}\frac{\sqrt{6}}{3}|W^+||\mathring{Ric}|^2\nonumber\\
\label{wlkj2} &=&\frac{1}{R}\left(1-2\sqrt{6}\frac{|W^+|}{R}\right)\left(|W^+|^2-\frac{\sqrt{6}}{3}\frac{|\mathring{Ric}|^2}{R}|W^+|\right).
\end{eqnarray} Hence, our assumption guarantees that $\frac{|W^+|}{R}\Delta_{\Psi}\left(\frac{|W^+|}{R}\right)$ does not change sign. 

Now, we need to show that $\frac{|W^+|}{R}$ is a $L^{2}_{\Psi}$-integrable function on $M^4.$ Indeed, we observe that 
\begin{equation}
\label{erf}
\int_{M}\frac{|W^+|^2}{R^2}\,e^{-\Psi}dV_{g}=\int_{M}\frac{|W^+|^2}{R^2}\,e^{-f}e^{2\ln R}dV_{g}=\int_{M}|W^{+}|^2e^{-f}dV_{g}.
\end{equation} At the same time, our assumption gives 
\begin{equation}
\label{erf2}
\int_{M}|W^+|^2 e^{-f} dV_{g}\leq \frac{1}{24} \int_{M}R^2 e^{-f} dV_{g}\leq \frac{1}{6}\int_{M}|Ric|^2 e^{-f} dV_{g}.
\end{equation} By Theorem 1.1 in \cite{MS}, we have $$\int_{M}|Ric|^2 e^{-f} dV_{g}<\infty.$$ Thus, it follows from (\ref{erf}) and (\ref{erf2}) that $\frac{|W^+|}{R}$ is a $L^{2}_{\Psi}$-integrable.

Next, we are going to apply a cut-off function argument similar as in the proof of Theorem \ref{thmA} to conclude that $\frac{|W^+|}{R}$ is constant. Indeed, we set a cut-off function $\rho:M\to \Bbb{R}$ such that $\rho=1$ on a geodesic ball $B_{p}(r)$ centered at a fixed point $p\in M$ of radius $r,$ $\rho=0$ outside of $B_{p}(2r)$ and $|\nabla\rho|\leq \frac{c}{r},$ where $c$ is a constant. Hence, taking into account that $\frac{|W^+|}{R}$ is a $\Psi$-subharmonic function on $M^4$ and $R>0$ unless $(M^4,\,g,\,f)$ is the Gaussian shrinking soliton, one easily verifies that

\begin{eqnarray*}
0 &\geq & -\int_{M}\rho^{2}\frac{|W^+|}{R}\Delta_{\Psi}\left(\frac{|W^{+}|}{R}\right)e^{-\Psi} dV_{g}\nonumber\\ &=&\int_{M}\left|\rho \nabla \left(\frac{|W^{+}|}{R}\right)+\frac{|W^{+}|}{R}\nabla \rho\right|^{2}e^{-\Psi}dV_{g}-\int_{M}\frac{|W^{+}|^{2}}{R^{2}}|\nabla \rho|^{2} e^{-\Psi}dV_{g},\nonumber
\end{eqnarray*} so that

\begin{eqnarray}
\int_{M}\left|\nabla \left(\rho \frac{|W^{+}|}{R}\right)\right|^{2} e^{-\Psi}dV_{g}\leq \int_{M} \frac{|W^{+}|^{2}}{R^{2}}|\nabla \rho|^{2} e^{-\Psi}dV_{g}.
\end{eqnarray} It follows that

\begin{eqnarray}
\int_{B(r)}\left|\nabla \left( \frac{|\WW^{+}|}{R}\right)\right|^{2} e^{-\Psi}dV_{g}&\leq & \int_{M}\left|\nabla \left(\rho \frac{|\WW^{+}|}{R}\right)\right|^{2} e^{-\Psi}dV_{g}\nonumber\\ &\leq & \int_{B(2r)\backslash B(r)} \frac{|\WW^{+}|^{2}}{R^{2}}|\nabla \rho|^{2} e^{-\Psi}dV_{g}\nonumber\\ &\leq & \frac{c^2}{r^2}\int_{M} \frac{|W^{+}|^{2}}{R^{2}} e^{-\Psi}dV_{g}.
\end{eqnarray} Since $\frac{|\WW^+|}{R}$ is a $L^{2}_{\Psi}$-integrable function on $M^4,$ we conclude that the right hand side tends to zero as $r\to \infty$ and consequently, $\frac{|W^+|}{R}$ must be constant on $M^4,$ as asserted.

If $\frac{|\WW^+|}{R}=0,$ then $\WW^+\equiv 0$. As in the proof of Theorem \ref{thmA}, the soliton must be isometric to either $\Bbb{S}^4$,  $\Bbb{CP}^2,$ or a finite quotient of the round cylinder $\Bbb{S}^{3}\times\Bbb{R}.$

Otherwise, if $\frac{|W^+|}{R}$ is a nonzero constant, each inequality above becomes an equality. In particular, by Lemma \ref{Katoinequality}, one obtains that $\nabla \WW^+=\nu \otimes \WW^+$ for some one-form $\nu.$ Besides, it follows from Lemma \ref{lemK} that $\WW^+$ has $\lambda_2=\lambda_3$ at each point. Finally, it suffices to use Lemma \ref{kahlerform} to conclude that $(M^4,\, g)$ is a locally K\"ahler-Ricci soliton.  

\end{proof}

\subsection{Proof of Corollary \ref{corC}}

\begin{proof}
To begin with, we follow the initial steps in the proof of Theorem \ref{thmB} up till (\ref{wlkj2}) in order to obtain

\begin{equation}
\label{rgt5}
2\frac{|W^+|}{R}\Delta_{\Psi}\left(\frac{|\WW^+|}{R}\right)  \geq  
\frac{1}{R}\left(1-2\sqrt{6}\frac{|\WW^+|}{R}\right)\left(|\WW^+|^2-\frac{\sqrt{6}}{3}\frac{|\mathring{Ric}|^2}{R}|\WW^+|\right).
\end{equation} Thereby, our assumption guarantees that the right hand side of (\ref{rgt5}) is nonnegative and therefore, by using the Maximum Principle, we conclude that $\frac{|\WW^+|}{R}$ is a constant. In particular, each previous inequality obtained in the establishment of (\ref{rgt5}) becomes an equality. Now, observe that $\WW^+=0$ leads to a contradiction with the assumption (\ref{cond3}). Consequently, $\WW^+\neq 0$ and we may use again the equality case in Lemma \ref{Katoinequality}, Lemma \ref{lemK} and Lemma \ref{kahlerform} in order to infer that $(M^4,\, g)$ is a locally K\"ahler-Ricci soliton, which gives the desired result. 
\end{proof}

\begin{bibdiv}
\begin{biblist}

\bib{ahs78}{article}{
author={Atiyah, Michael},
 author={Hitchin, Nigel}, 
 author={Singer, Isadore}, 
 title={Self-duality in four-dimensional Riemannian geometry}, 
 journal={Proc. Roy. Soc. London Ser. A}, 
 volume={362},
 date={1978}, 
 pages={425--461},
}

\bib{BCCD22KahlerRicci}{article}{
	title={A new complete two-dimensional shrinking gradient K\"ahler-Ricci soliton},
	author={Bamler, Richard},
	author={Cifarelli, Charles},
	author={Conlon, Ronan},
	author={Deruelle, Alix},
	journal={ArXiv:2206.10785},
	date={2022},
}

\bib{berger61}{article}{
author={Berger, Marcel}, 
title={Sur quelques vari\'et\'es d'Einstein compactes},
journal={Ann. Mat. Pura Appl.}, 
volume={53}, 
date={1961}, 
number={4}, 
pages={89--95},
}

\bib{Besse}{article}{
 AUTHOR = {Besse, Arthur L.},
     TITLE = {Einstein manifolds},
    SERIES = {Classics in Mathematics},
      NOTE = {Reprint of the 1987 edition},
 PUBLISHER = {Springer-Verlag, Berlin},
      YEAR = {2008},
     PAGES = {xii+516},
      ISBN = {978-3-540-74120-6},
   MRCLASS = {53C25 (53-02)},
  MRNUMBER = {2371700},
}

\bib{caoALM11}{article}{
   author={Cao, Huai-Dong},
   title={Recent progress on Ricci solitons},
   conference={
      title={Recent advances in geometric analysis},
   },
   book={
      series={Adv. Lect. Math. (ALM)},
      volume={11},
      publisher={Int. Press, Somerville, MA},
   },
   date={2010},
   pages={1--38},
   review={\MR{2648937}},
}

\bib{Cao1}{article}{
   author={Cao, Huai-Dong},
   title={Existence of gradient K\"{a}hler-Ricci solitons},
   conference={
      title={Elliptic and parabolic methods in geometry},
      address={Minneapolis, MN},
      date={1994},
   },
   book={
      publisher={A K Peters, Wellesley, MA},
   },
   date={1996},
   pages={1--16},
   review={\MR{1417944}},
}

\bib{CaoA}{article}{
   author={Cao, Huai-Dong},
   author={Chen, Bing-Long},
   author={Zhu, Xi-Ping},
   title={Recent developments on Hamilton's Ricci flow},
   conference={
      title={Surveys in differential geometry. Vol. XII. Geometric flows},
   },
   book={
      series={Surv. Differ. Geom.},
      volume={12},
      publisher={Int. Press, Somerville, MA},
   },
   date={2008},
   pages={47--112},
   review={\MR{2488948}},
}

\bib{CaoChen}{article}{
   author={Cao, Huai-Dong},
   author={Chen, Qiang},
   title={On Bach-flat gradient shrinking Ricci solitons},
   journal={Duke Math. J.},
   volume={162},
   date={2013},
   number={6},
   pages={1149--1169},
   issn={0012-7094},
   review={\MR{3053567}},
}

\bib{CRZ}{article}{
   author={Cao, Huai-Dong},
   author={Ribeiro Jr, Ernani}
   author={Zhou, Detang},
   title={Four-dimensional complete gradient shrinking Ricci solitons},
   journal={J. Reine Angew. Math.},
   volume={2021},
   date={2021},
   number={},
   pages={127-144},
   issn={},
   review={},
   }

\bib{CZ}{article}{
   author={Cao, Huai-Dong},
   author={Zhou, Detang},
   title={On complete gradient shrinking Ricci solitons},
   journal={J. Differential Geom.},
   volume={85},
   date={2010},
   number={2},
   pages={175--185},
   issn={0022-040X},
   review={\MR{2732975}},
}

\bib{CH}{article}{
   author={Cao, Xiaodong},
   author={Tran, Hung},
   title={The Weyl tensor of gradient Ricci solitons},
   journal={Geom. Topol.},
   volume={20},
   date={2016},
   number={1},
   pages={389--436},
   issn={1465-3060},
   review={\MR{3470717}},
}

\bib{CWZ}{article}{
   author={Cao, Xiaodong},
   author={Wang, Biao},
   author={Zhang, Zhou},
   title={On locally conformally flat gradient shrinking Ricci solitons},
   journal={Commun. Contemp. Math.},
   volume={13},
   date={2011},
   number={2},
   pages={269--282},
   issn={0219-1997},
   review={\MR{2794486}},
}

\bib{CGT}{article}{
author={Cao, Xiaodong}, 
author={Gursky, Mattew}, 
author={Tran, Hung}, 
title={Curvature of the second kind and a Conjecture of Nishikawa}, 
journal={ArXiv:2112.01212 [math.DG]}, 
date={2021}}

\bib{Catino}{article}{
   author={Catino, Giovanni},
   title={Complete gradient shrinking Ricci solitons with pinched curvature},
   journal={Math. Ann.},
   volume={355},
   date={2013},
   number={2},
   pages={629--635},
   issn={0025-5831},
   review={\MR{3010141}},
}

\bib{CMM}{article}{
   AUTHOR={Catino, Giovanni},
   AUTHOR={Mastrolia, Paolo},
   AUTHOR={Monticelli, Dario},
     TITLE = {Gradient {R}icci solitons with vanishing conditions on {W}eyl},
   JOURNAL = {J. Math. Pures Appl. (9)},
  FJOURNAL = {Journal de Math\'{e}matiques Pures et Appliqu\'{e}es. Neuvi\`eme S\'{e}rie},
    VOLUME = {108},
      YEAR = {2017},
    NUMBER = {1},
     PAGES = {1--13},
      ISSN = {0021-7824},
   MRCLASS = {53C20 (53C24 53C25)},
  MRNUMBER = {3660766},
       URL = {https://doi.org/10.1016/j.matpur.2016.10.007},
}

\bib{catinoAdv}{article}{
   author={Catino, Giovanni},
   title={Integral pinched shrinking Ricci solitons},
   journal={Adv. Math.},
   volume={303},
   date={2016},
   pages={279--294},
   issn={0001-8708},
   review={\MR{3552526}},
}

\bib{Chen}{article}{
   author={Chen, Bing-Long},
   title={Strong uniqueness of the Ricci flow},
   journal={J. Differential Geom.},
   volume={82},
   date={2009},
   number={2},
   pages={363--382},
   issn={0022-040X},
   review={\MR{2520796}},
}

\bib{CW}{article}{
   author={Chen, Xiuxiong},
   author={Wang, Yuanqi},
   title={On four-dimensional anti-self-dual gradient Ricci solitons},
   journal={J. Geom. Anal.},
   volume={25},
   date={2015},
   number={2},
   pages={1335--1343},
   issn={1050-6926},
   review={\MR{3319974}},
}

\bib{CZ2021}{article}{
 author={Cheng, Xu}, author={Zhou, Detang}, 
 title={Rigidity of four-dimensional gradient shrinking Ricci soliton}, 
 journal={ArXiv:2105.10744 [math.DG]},
  date={2021},
  }

\bib{Chow}{article}{author={Chow, Bennet}, author={Lu, Peng}, author={Yang, Bo}, title={Lower bounds for the scalar curvatures of noncompact gradient Ricci solitons}, journal={C. R. Math. Acad. Sci. Paris}, volume={349}, date={2011}, number={23-24}, review={\MR{2861997}}, }

\bib{derd1}{article}{
   author={Derdzi\'{n}ski, Andrzej},
   title={Self-dual K\"{a}hler manifolds and Einstein manifolds of dimension
   four},
   journal={Compositio Math.},
   volume={49},
   date={1983},
   number={3},
   pages={405--433},
   issn={0010-437X},
   review={\MR{707181}},
}

\bib{cifarelli2022finite}{article}{
author={Cifarelli, Charles},
author={Conlon, Ronan},
author={Deruelle, Alix},
	title={On finite time Type I singularities of the K\"ahler-Ricci flow on compact K\"ahler surfaces},
	journal={ArXiv:2203.04380},
	date={2022},
}

\bib{ELM}{article}{
   author={Eminenti, Manolo},
   author={La Nave, Gabriele},
   author={Mantegazza, Carlo},
   title={Ricci solitons: the equation point of view},
   journal={Manuscripta Math.},
   volume={127},
   date={2008},
  number={3},
   pages={345--367},
 issn={0025-2611},
   review={\MR{2448435}},
}

\bib{FIK}{article}{
   author={Feldman, Mikhail},
   author={Ilmanen, Tom},
   author={Knopf, Dan},
   title={Rotationally symmetric shrinking and expanding gradient
   K\"{a}hler-Ricci solitons},
   journal={J. Differential Geom.},
   volume={65},
   date={2003},
   number={2},
   pages={169--209},
   issn={0022-040X},
   review={\MR{2058261}},
}

\bib{FLGR}{article}{
   author={Fern\'{a}ndez-L\'{o}pez, Manuel},
   author={Garc\'{\i}a-R\'{\i}o, Eduardo},
   title={Rigidity of shrinking Ricci solitons},
   journal={Math. Z.},
   volume={269},
   date={2011},
   number={1-2},
   pages={461--466},
   issn={0025-5874},
   review={\MR{2836079}},
}

\bib{Hamilton2}{article}{
   author={Hamilton, Richard S.},
   title={The formation of singularities in the Ricci flow},
   conference={
      title={Surveys in differential geometry, Vol. II},
      address={Cambridge, MA},
      date={1993},
   },
   book={
      publisher={Int. Press, Cambridge, MA},
   },
   date={1995},
   pages={7--136},
   review={\MR{1375255}},
}

\bib{Ivey}{article}{
   author={Ivey, Thomas},
   title={New examples of complete Ricci solitons},
   journal={Proc. Amer. Math. Soc.},
   volume={122},
   date={1994},
   number={1},
   pages={241--245},
   issn={0002-9939},
   review={\MR{1207538}},
}

\bib{KW}{article}{
   author={Kotschwar, Brett},
   author={Wang, Lu},
   title={Rigidity of asymptotically conical shrinking gradient Ricci solitons},
   journal={J. Differential Geom.},
   volume={100},
   date={2015},
   number={1},
   pages={55--108},
   review={\MR{3326574}},
}

\bib{Ko}{article}{
   author={Koiso, Norihito},
   title={On rotationally symmetric Hamilton's equation for K\"{a}hler-Einstein
   metrics},
   conference={
      title={Recent topics in differential and analytic geometry},
   },
   book={
      series={Adv. Stud. Pure Math.},
      volume={18},
      publisher={Academic Press, Boston, MA},
   },
   date={1990},
   pages={327--337},
   review={\MR{1145263}},
   doi={10.2969/aspm/01810327},
}

\bib{MS}{article}{
   author={Munteanu, Ovidiu},
   author={Sesum, Natasa},
   title={On gradient Ricci solitons},
   journal={J. Geom. Anal.},
   volume={23},
   date={2013},
   number={2},
   pages={539--561},
   issn={1050-6926},
   review={\MR{3023848}},
}

\bib{MW}{article}{
   author={Munteanu, Ovidiu},
   author={Wang, Jiaping},
   title={Geometry of shrinking Ricci solitons},
   journal={Compos. Math.},
  volume={151},
   date={2015},
  number={12},
   pages={2273--2300},
   issn={0010-437X},
   review={\MR{3433887}},
}

\bib{MW2}{article}{
  author={Munteanu, Ovidiu},
  author={Wang, Jiaping},
   title={Positively curved shrinking Ricci solitons are compact},
   journal={J. Differential Geom.},
   volume={106},
  date={2017},
   number={3},
   pages={499--505},
  issn={0022-040X},
   review={\MR{3680555}},
}

\bib{Naber}{article}{
   author={Naber, Aaron},
   title={Noncompact shrinking four solitons with nonnegative curvature},
   journal={J. Reine Angew. Math.},
   volume={645},
   date={2010},
   pages={125--153},
   issn={0075-4102},
   review={\MR{2673425}},
}

\bib{Ni}{article}{
   author={Ni, Lei},
   author={Wallach, Nolan},
   title={On a classification of gradient shrinking solitons},
   journal={Math. Res. Lett.},
   volume={15},
   date={2008},
   number={5},
   pages={941--955},
   issn={1073-2780},
   review={\MR{2443993}},
}

\bib{Perelman2}{article}{author={Perelman, Grisha}, title={Ricci flow with surgery on three manifolds}, journal={ArXiv:math.DG/0303109}, date={2003},}

\bib{PW2}{article}{
   author={Petersen, Peter},
   author={Wylie, William},
   title={On the classification of gradient Ricci solitons},
   journal={Geom. Topol.},
   volume={14},
   date={2010},
   number={4},
   pages={2277--2300},
   issn={1465-3060},
   review={\MR{2740647}},

}

\bib{PRS}{article}{author={Pigola, Stefano}, author={Rimoldi, Michele}, author={Setti, Alberto}, title={Remarks on non-compact gradient Ricci solitons}, journal={Math. Z. } volume={268}, date={2011}, number={}, pages={777--790}, 
}

\bib{st69}{article}{
author={Singer, Isadore}, 
author={Thorpe, John}, 
title={The curvature of 4-dimensional Einstein spaces}, 
journal={In Global Analysis (Papers in Honor of K. Kodaira), Univ. Tokyo Press, Tokyo}, 
date={1969},
pages={355-365},
}

\bib{TZ1}{article}{
author={Tian, Gang}, 
author={Zhu, Xiaohua},
title={Uniqueness of K\"ahler–Ricci solitons},
journal={Acta Math.},
volume={184},
date={2000}, 
number={},
pages={271--305},
review={\MR{1768112}},
}

\bib{TZ2}{article}{
author={Tian, Gang}, 
author={Zhu, Xiaohua},
title={A new holomorphic invariant and uniqueness of K\"ahler-Ricci solitons},
journal={Comment. Math. Helv.},
volume={77},
date={2002}, 
number={2},
pages={297--325},
review={\MR{1915043}},
}

\bib{WZ1}{article}{
author={Wang, Xu-Jia},
author={Zhu, Xiaohua},
title={K\"ahler-Ricci solitons on toric manifolds with positive first Chern class},
journal={Advances in Mathematics},
volume={188},
number={1},
date={2004},
pages={87-103},
}

\bib{Wu}{article}{
   author={Wu, Jia-Yong},
   author={Wu, Peng},
   author={Wylie, William},
   title={Gradient shrinking Ricci solitons of half harmonic Weyl curvature},
   journal={Calc. Var. Partial Differential Equations},
   volume={57},
   date={2018},
   number={5},
   pages={Paper No. 141, 15},
   issn={0944-2669},
   review={\MR{3849152}},
}

\bib{zhang}{article}{
   author={Zhang, Zhu-Hong},
   title={Gradient shrinking solitons with vanishing Weyl tensor},
   journal={Pacific J. Math.},
   volume={242},
   date={2009},
   number={1},
   pages={189--200},
   issn={0030-8730},
   review={\MR{2525510}},
}

\bib{Zhang2}{article}{
   author={Zhang, Zhu-Hong}, title={A gap theorem of four-dimensional gradient shrinking solitons}, journal={Commun. Anal. Geom.}, volume={28}, date={2020}, number={3}, pages={729--742},review={ }
}

\end{biblist}
\end{bibdiv}

\end{document}